\numberwithin{equation}{section}
\numberwithin{figure}{section}
\theoremstyle{plain}
\newtheorem{thm}{\protect\theoremname}[section]
\theoremstyle{plain}
\newtheorem{cor}[thm]{\protect\corollaryname}
\theoremstyle{plain}
\newtheorem{prop}[thm]{\protect\propositionname}
\theoremstyle{definition}
\newtheorem{defn}[thm]{\protect\definitionname}
\theoremstyle{definition}
\newtheorem{example}[thm]{\protect\examplename}
\theoremstyle{plain}
\newtheorem{lem}[thm]{\protect\lemmaname}
\providecommand{\corollaryname}{Corollary}
\providecommand{\definitionname}{Definition}
\providecommand{\examplename}{Example}
\providecommand{\lemmaname}{Lemma}
\providecommand{\propositionname}{Proposition}
\providecommand{\theoremname}{Theorem}
\begin{document}
\global\long\def\F{\mathrm{\mathbf{F}} }%
\global\long\def\Aut{\mathrm{Aut}}%
\global\long\def\C{\mathbf{C}}%
\global\long\def\H{\mathbb{H}}%
\global\long\def\U{\mathbf{U}}%
\global\long\def\P{\mathcal{P}}%
\global\long\def\ext{\mathrm{ext}}%
\global\long\def\hull{\mathrm{hull}}%
\global\long\def\triv{\mathrm{triv}}%
\global\long\def\Hom{\mathrm{Hom}}%

\global\long\def\trace{\mathrm{tr}}%
\global\long\def\End{\mathrm{End}}%

\global\long\def\L{\mathcal{L}}%
\global\long\def\W{\mathcal{W}}%
\global\long\def\E{\mathbb{E}}%
\global\long\def\SL{\mathrm{SL}}%
\global\long\def\R{\mathbf{R}}%
\global\long\def\Pairs{\mathrm{PowerPairs}}%
\global\long\def\Z{\mathbf{Z}}%
\global\long\def\rs{\to}%
\global\long\def\A{\mathcal{A}}%
\global\long\def\a{\mathbf{a}}%
\global\long\def\rsa{\rightsquigarrow}%
\global\long\def\D{\mathbf{D}}%
\global\long\def\b{\mathbf{b}}%
\global\long\def\df{\mathrm{def}}%
\global\long\def\eqdf{\stackrel{\df}{=}}%
\global\long\def\ZZ{\mathcal{Z}}%
\global\long\def\Tr{\mathrm{Tr}}%
\global\long\def\N{\mathbf{N}}%
\global\long\def\std{\mathrm{std}}%
\global\long\def\HS{\mathrm{H.S.}}%
\global\long\def\e{\mathbf{e}}%
\global\long\def\c{\mathbf{c}}%
\global\long\def\d{\mathbf{d}}%
\global\long\def\AA{\mathbf{A}}%
\global\long\def\BB{\mathbf{B}}%
\global\long\def\u{\mathbf{u}}%
\global\long\def\v{\mathbf{v}}%
\global\long\def\spec{\mathrm{spec}}%
\global\long\def\Ind{\mathrm{Ind}}%
\global\long\def\half{\frac{1}{2}}%
\global\long\def\Re{\mathrm{Re}}%
\global\long\def\Im{\mathrm{Im}}%
\global\long\def\Rect{\mathrm{Rect}}%
\global\long\def\Crit{\mathrm{Crit}}%
\global\long\def\Stab{\mathrm{Stab}}%
\global\long\def\SL{\mathrm{SL}}%
\global\long\def\TF{\mathsf{TF}}%
\global\long\def\p{\mathfrak{p}}%
\global\long\def\j{\mathbf{j}}%
\global\long\def\uB{\underline{B}}%
\global\long\def\tr{\mathrm{tr}}%
\global\long\def\rank{\mathrm{rank}}%
\global\long\def\K{\mathcal{K}}%
\global\long\def\hh{\mathbb{H}}%
\global\long\def\h{\mathfrak{h}}%

\global\long\def\EE{\mathcal{E}}%
\global\long\def\PSL{\mathrm{PSL}}%
\global\long\def\G{\mathcal{G}}%
\global\long\def\Int{\mathrm{Int}}%
\global\long\def\acc{\mathrm{acc}}%
\global\long\def\awl{\mathsf{awl}}%
\global\long\def\even{\mathrm{even}}%
\global\long\def\z{\mathbf{z}}%
\global\long\def\id{\mathrm{id}}%
\global\long\def\CC{\mathcal{C}}%
\global\long\def\cusp{\mathrm{cusp}}%
\global\long\def\new{\mathrm{new}}%

\global\long\def\LL{\mathbb{L}}%
\global\long\def\M{\mathbb{M}}%
\global\long\def\I{\mathcal{I}}%
\global\long\def\X{X}%
\global\long\def\free{\mathbf{F}}%
\global\long\def\into{\hookrightarrow}%
\global\long\def\Ext{\mathrm{Ext}}%

\title{Strongly convergent unitary representations\\
 of limit groups}
\author{Larsen Louder and Michael Magee\\
With appendix by Will Hide and Michael Magee}
\maketitle
\begin{abstract}
{\normalsize{}We prove that all finitely generated fully residually
free groups (limit groups) have a sequence of finite dimensional unitary
representations that `strongly converge' to the regular representation
of the group. The corresponding statement for finitely generated free
groups was proved by Haagerup and Thorbjørnsen in 2005. In fact, we
can take the unitary representations to arise from representations
of the group by permutation matrices, as was proved for free groups
by Bordenave and Collins.}{\normalsize\par}

{\normalsize{}As for Haagerup and Thorbjørnsen, the existence of such
representations implies that for any non-abelian limit group, the
Ext-invariant of the reduced $C^{*}$-algebra is not a group (has
non-invertible elements).}{\normalsize\par}

{\normalsize{}An important special case of our main theorem is in
application to the fundamental groups of closed orientable surfaces
of genus at least two. In this case, our results can be used as an
input to the methods previously developed by the authors of the appendix.
The output is a variation of our previous proof of Buser's 1984 conjecture
that there exist a sequence of closed hyperbolic surfaces with genera
tending to infinity and first eigenvalue of the Laplacian tending
to $\frac{1}{4}$. In this variation of the proof, the systoles of
the surfaces are bounded away from zero and the surfaces can be taken
to be arithmetic.}{\normalsize\par}

\newpage{}
\end{abstract}
{\small{}\tableofcontents{}}{\small\par}

\section{Introduction}

A discrete group $\Gamma$ is \emph{fully residually free }(FRF)\emph{
}if for any finite set $S\subset\Gamma$, there exists a homomorphism
$\Gamma\to\F$ that is injective on $S$ where $\F$ is a free group.
Finitely generated FRF groups are known to coincide with Sela's \emph{limit
groups }\cite{SelaI}\emph{, }so we use these two notions interchangeably
in the sequel.

For $N\in\N$ let $\U(N)$ denote the group of $N\times N$ complex
unitary matrices. For a discrete group $\Gamma$, $\lambda_{\Gamma}:\Gamma\to\End(\ell^{2}(\Gamma))$
is the left regular representation. It was an open problem for some
years, popularized by Voiculescu in \cite[Qu. 5.12]{voic_quasidiag},
whether for a finitely generated free group $\F$, there exists a
sequence of unitary representations $\{\rho_{i}:\F\to\U(N_{i})\}_{i=1}^{\infty}$
such that for any element $z\in\C[\F]$, 
\[
\limsup_{i\to\infty}\|\rho_{i}(z)\|\leq\|\lambda_{\F}(z)\|.
\]
The norm on the left is the operator norm on $\C^{N_{i}}$ with respect
to the standard Hermitian metric, and the norm on the right is the
operator norm on $\ell^{2}(\Gamma)$. This problem was solved in the
affirmative in a huge breakthrough by Haagerup and Thorbjørnsen \cite{HaagerupThr}.

In fact, following \cite{voic_quasidiag}, given that the reduced
$C^{*}$-algebra of $\F$ is simple by a result of Powers \cite{powers},
the inequality above can be improved automatically\footnote{See proof of Theorem \ref{thm:main-analytical-theorem} below for
details.} to
\begin{equation}
\lim_{i\to\infty}\|\rho_{i}(z)\|=\|\lambda_{\F}(z)\|\quad\forall z\in\C[\F].\label{eq:strong-convergence}
\end{equation}
This notion of convergence of a sequence of finite dimensional unitary
representations given by (\ref{eq:strong-convergence}) applies equally
as well to any discrete group $\Gamma$ and we refer to this as \emph{strong
convergence.}
\begin{thm}
\label{thm:main-analytical-theorem}Any limit group $\Gamma$ has
a sequence of finite dimensional unitary representations that strongly
converge to the regular representation of $\Gamma$. In fact, these
unitary representations can be taken to factor through
\begin{equation}
\Gamma\to S_{N}\xrightarrow{\mathrm{std}}\U(N-1)\label{eq:factoring}
\end{equation}
 for some varying $N$, where $S_{N}$ is the group of permutations
of $N$ letters, and $\mathrm{std}$ is the $N-1$ dimensional irreducible
component of the representation of $S_{N}$ by 0-1 matrices.
\end{thm}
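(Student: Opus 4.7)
The plan is to induct along the canonical hierarchical structure of limit groups, with the Bordenave--Collins theorem providing the base case. By a structure theorem of Kharlampovich--Myasnikov (implicit also in Sela's work), every limit group $\Gamma$ embeds into an \emph{iterated centralizer extension} (ICE) of a finitely generated free group: a tower $\F = F_{0} \leq F_{1} \leq \cdots \leq F_{n}$ where each step has the form $F_{j+1} = F_{j} *_{C_{j}} (C_{j} \times \Z^{k_{j}})$ with $C_{j}=C_{F_{j}}(a_{j})$ the (malnormal, abelian) centralizer of some $a_{j}\in F_{j}$. I would prove the conclusion of the theorem for each $F_{j}$ by induction on $j$ (producing a strongly convergent sequence of permutation representations that factor through $\std$), and then descend to $\Gamma \leq F_{n}$ by restriction.

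The base case $j=0$ is precisely the Bordenave--Collins theorem: for uniformly random homomorphisms $\F\to S_{N}$ the post-compositions with $\std$ converge strongly to $\lambda_{\F}$ almost surely. Restriction is also straightforward: if $H\leq G$ and $\rho_{i}\colon G\to S_{N_{i}}\xrightarrow{\std}\U(N_{i}-1)$ strongly converges to $\lambda_{G}$, the restrictions $\rho_{i}|_{H}$ still have exactly the required factoring form, and because $\ell^{2}(G)$ decomposes as an $H$-module into copies of $\ell^{2}(H)$ indexed by right cosets, $\|\lambda_{G}(z)\|=\|\lambda_{H}(z)\|$ for every $z\in\C[H]$; hence $\rho_{i}|_{H}\to\lambda_{H}$ strongly.

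The heart of the argument, and the step I expect to be the main obstacle, is the centralizer extension step: given a strongly convergent $\rho_{i}\colon F_{j}\to S_{N_{i}}$, construct a strongly convergent $\tilde\rho_{i}\colon F_{j+1}\to S_{M_{i}}$. The new generators of the $\Z^{k_{j}}$ factor must commute with $\rho_{i}(C_{j})$ but should otherwise be ``as free as possible''. The natural ensemble is to sample them independently and uniformly from the centralizer inside $S_{M_{i}}$ of the image of $C_{j}$, quite possibly after first enlarging $[N_{i}]$ (for example by tensoring with uniformly random permutation representations of $\Z$) so as to guarantee large orbit multiplicities and a sufficiently rich centralizer. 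The decisive technical task is then to prove strong convergence of the augmented representation. Bordenave--Collins' proof uses fully uniform random permutations together with sharp asymptotics for expected traces of words and a linearization trick; here the new permutations are uniform only inside a subgroup that interacts non-trivially with $\rho_{i}(F_{j})$, forcing one to decompose $\ell^{2}$ of the acted-on set into $C_{j}$-isotypic components, analyze the augmented operators block by block (each block being a uniformly random element of some $S_{m}\wr \Z_{n}$), and then reassemble. The malnormality of $C_{j}$ in $F_{j}$ should be used to extract enough asymptotic freeness between the old and new generators for the Bordenave--Collins machinery, suitably adapted to the amalgamated-free-product-over-an-amenable-subgroup setting, to carry through.
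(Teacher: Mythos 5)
Your outline leaves the decisive step unproved, and it is precisely the step that is hard. Everything else in your plan is fine: the base case is Bordenave--Collins, and the restriction step is correct (for $H\leq G$ one has $\ell^{2}(G)\vert_{H}\cong\bigoplus\ell^{2}(H)$ over right cosets, so $\|\lambda_{G}(z)\|=\|\lambda_{H}(z)\|$ for $z\in\C[H]$, and restricting a permutation representation preserves the form (\ref{eq:factoring})). But the centralizer-extension step --- producing strongly convergent permutation representations of $F_{j}*_{C_{j}}(C_{j}\times\Z^{k_{j}})$ from ones of $F_{j}$ --- is not a ``technical task'' to be adapted from Bordenave--Collins; it is a strong-convergence-with-amalgamation statement for random permutations constrained to lie in the centralizer of a given subgroup, and no argument is given for it. Sampling the new generators uniformly from the centralizer of $\rho_{i}(C_{j})$ in $S_{M_{i}}$ does not obviously yield even \emph{weak} asymptotic freeness with amalgamation over $C_{j}$, let alone convergence of operator norms; the blocks you describe are not independent of the old generators in any way that the Bordenave--Collins trace method can exploit, and one would additionally need to identify the limiting norm with the norm in $C^{*}_{r}$ of the amalgamated product. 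As written, the proposal reduces the theorem to an unproved statement that is at least as hard as the theorem itself.

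The paper avoids this entirely: it never constructs strongly convergent representations of the ICE groups. The ICE structure is used only to prove a \emph{quantitative residual freeness} statement (Proposition \ref{prop:distortion}): for each $r$ there is a homomorphism $f:\Gamma\to\F$ injective on $B_{Y}(r)$ with $|f(g)|_{X}\leq Cr^{D}$, obtained by making Baumslag's ``big powers'' argument effective at each centralizer extension. This polynomial distortion bound is then combined with Haagerup's inequality via the trick $\|\lambda_{\F}(b)\|^{2m}=\|\lambda_{\F}((b^{*}b)^{m})\|\leq[Cr^{D}]^{3/2}\|(b^{*}b)^{m}\|_{\ell^{2}}=[Cr^{D}]^{3/2}\|(a^{*}a)^{m}\|_{\ell^{2}}\leq[Cr^{D}]^{3/2}\|\lambda_{\Gamma}(a)\|^{2m}$; taking $2m$-th roots with $m$ large kills the polynomial factor, showing that $\phi$ increases reduced norms on a fixed ball by at most $\epsilon$ (Theorem \ref{thm:Limit-groups-are-C-free}, ``$C^{*}$-residual freeness''). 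Bordenave--Collins is then invoked only once, for the free quotient, and simplicity of $C^{*}_{r}(\Gamma)$ (via the property $P_{\mathrm{nai}}$ of Bekka--Cowling--de la Harpe) upgrades the resulting one-sided $\limsup$ bound to genuine strong convergence; the abelian case $\Z^{r}$ is done separately by Fourier analysis. If you want to salvage your route, you would need to either prove the amalgamated strong convergence statement (a substantial independent result) or switch to the norm-comparison strategy, where the randomness only ever enters through the free group.
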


It was proved by G. Baumslag in \cite{GBaumslag} that the fundamental
groups $\Lambda_{g}$ of closed orientable surfaces are FRF, and it
is also known \cite[pp. 414-415]{baumslag_residually_free} that the
fundamental groups of non-orientable surfaces $S$ with $\chi(S)\leq-2$
are FRF. This gives the following corollary of Theorem \ref{thm:main-analytical-theorem}.
\begin{cor}
\label{cor:surfaces}Let $\Gamma$ denote the fundamental group of
a connected closed surface $S$ that is either orientable with no
constraint on $\chi(S)$, or non-orientable with $\chi(S)\leq-2$.
Then $\Gamma$ has a sequence of finite dimensional unitary representations
that strongly converge to the regular representation. Moreover, they
can be taken to be of the form (\ref{eq:factoring}) for some varying
$N$.
\end{cor}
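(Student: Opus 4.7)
The plan is simple: Corollary \ref{cor:surfaces} should follow immediately from Theorem \ref{thm:main-analytical-theorem}, and the only work is to verify that in each listed case the fundamental group $\Gamma$ is a finitely generated fully residually free group. All the substantive analytic content is already packaged in the theorem; the corollary is really a bookkeeping exercise about which closed surfaces have FRF fundamental groups.

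First I would dispose of the orientable case. For a closed orientable surface of genus $g\ge 2$, the FRF property is exactly G.\ Baumslag's classical theorem \cite{GBaumslag}, already cited immediately before the corollary in the excerpt. The degenerate cases $g=0$ and $g=1$ present no difficulty: the sphere has trivial fundamental group, which is vacuously FRF, and the torus has $\Gamma=\Z^{2}$, which is FRF because for any finite $S\subset\Z^{2}\setminus\{0\}$ one can choose a rank-one linear projection $\Z^{2}\to\Z$ whose kernel avoids $S$, by picking a codirection off the finitely many rational lines through elements of $S$.

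Next, for the non-orientable case with $\chi(S)\le-2$, I would quote \cite[pp.~414--415]{baumslag_residually_free}, the reference already used by the authors, which establishes the FRF property precisely on this range. The exclusion of $\mathbb{RP}^{2}$ and the Klein bottle---the only non-orientable closed surfaces with $\chi(S)>-2$---is essential: the former group has torsion, and the latter, while torsion-free, is not residually free. With $\Gamma$ confirmed in every case to be a finitely generated limit group, Theorem \ref{thm:main-analytical-theorem} immediately supplies a strongly convergent sequence of finite dimensional unitary representations in the permutation form (\ref{eq:factoring}), and no further obstacle remains.
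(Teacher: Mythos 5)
Your proposal is correct and is essentially the paper's own argument: the corollary is deduced directly from Theorem \ref{thm:main-analytical-theorem} by citing G.~Baumslag for the orientable case and \cite[pp.~414--415]{baumslag_residually_free} for non-orientable surfaces with $\chi(S)\leq-2$, with the sphere and torus cases being trivial. Your extra verification of the degenerate genera and the remarks on the excluded non-orientable cases are fine but not needed beyond what the paper already records.
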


Corollary \ref{cor:surfaces} leaves open the cases of connected non-orientable
surfaces with $\chi=1$ ($\R P^{2}$), $\chi=0$ (the Klein bottle
$\R P^{2}\#\R P^{2}$), and $\chi=-1$ ($(\R P^{2})^{\#3}$). In all
these cases the corresponding fundamental groups are not FRF\footnote{The first two cases are easy to check, and the case of $\chi=-1$
is due to Lyndon \cite{LYNDONEQUATION}.}. The fundamental group of $\R P^{2}$ is $\Z/2\Z$, and its regular
representation is finite dimensional. We prove the following as an
addendum to our main result.
\begin{prop}
\label{prop:Klein-bottle}The fundamental groups\\
$\pi_{1}((\R P^{2})^{\#2})=\langle\,a,b\,|\,b^{-1}=aba^{-1}\,\rangle$
and $\pi_{1}((\R P^{2})^{\#3})=\langle\,a,b,c\,|\,a^{2}b^{2}c^{2}\,\rangle$
have sequences of finite dimensional unitary representations that
strongly converge to their respective regular representations.
\end{prop}

The proof of Theorem \ref{thm:main-analytical-theorem} revolves around
the following potential property of discrete groups that we introduce
here.
\begin{defn}
\label{def:c*RF}A discrete group $\Gamma$ is \emph{$C^{*}$-residually
free} if for any finite set $S$ and $\epsilon>0$, there is a homomorphism
$\phi:\Gamma\to\F$ with $\F$ free such that 
\[
\|\lambda_{\F}(\phi(z))\|\leq\|\lambda_{\Gamma}(z)\|+\epsilon.
\]
for all $z\in\C[\Gamma]$ supported on $S$ with unit $\ell^{1}$
norm.
\end{defn}

\begin{example}
\label{exa:amenable-extensions}Any extension $N\to G\xrightarrow{\phi}\F$
of a free group by an amenable group $N$ is $C^{*}$-residually free.
Indeed, since $N$ is amenable $1$ is weakly contained in the regular
representation of $N$. Then by Fell's continuity of induction (\cite{Fell},
\cite[Thm. F.3.5]{BdlHV}) we have that the quasi-regular representation
of $G$ on $\ell^{2}(G/N)$ is weakly contained in the regular representation
$G$, hence by \cite[Thm. F.4.4]{BdlHV} for any $z\in\C[\F]$
\[
\|\lambda_{G/N}(zN)\|=\|\lambda_{\F}(\phi(z))\|\leq\|\lambda_{G}(z)\|.
\]
\end{example}

Here we prove the following.
\begin{thm}
\label{thm:Limit-groups-are-C-free}Limit groups are $C^{*}$-residually
free.
\end{thm}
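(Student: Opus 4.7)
The plan is to induct on the tower structure of limit groups. First I would observe that $C^{*}$-residual freeness passes to (finitely generated) subgroups: if $\Gamma_{0}\leq\Gamma$ and $\phi:\Gamma\to\F$ realizes the norm bound for a finite $S_{0}\subset\Gamma_{0}$, then $\phi|_{\Gamma_{0}}$ realizes it for $\Gamma_{0}$, since $\lambda_{\Gamma}|_{\Gamma_{0}}$ is a multiple of $\lambda_{\Gamma_{0}}$ and hence $\|\lambda_{\Gamma_{0}}(z)\|=\|\lambda_{\Gamma}(z)\|$ for $z\in\C[\Gamma_{0}]$. By the Kharlampovich--Myasnikov/Sela structure theorem, every finitely generated limit group embeds as a subgroup of an $\omega$-residually free tower: a group built from a free group by finitely many iterated amalgamated free products along cyclic subgroups, where at each stage one adjoins either a free abelian group (a ``centralizer extension'') or the fundamental group of a hyperbolic surface with boundary (which is itself free). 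By subgroup closure, it suffices to establish the theorem for such tower groups.

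This suggests an induction on tower height. The base case of a free group is immediate with $\phi=\id$. The inductive step would follow from a gluing statement: if $G_{1},G_{2}$ are $C^{*}$-residually free and share a cyclic subgroup $A$, then $G_{1}*_{A}G_{2}$ is $C^{*}$-residually free. Granting this, the tower is handled iteratively, with the second factor at each stage a free abelian group or a surface-with-boundary group. Both are already $C^{*}$-residually free: the surface (free) case via the identity, and free abelian groups via the trivial homomorphism, since the standard Fourier inequality gives $|\sum_{g}a_{g}|\leq\|\lambda_{\Z^{k}}(\sum_{g}a_{g}g)\|$ for any element of $\C[\Z^{k}]$.

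To prove the gluing lemma, given finite $S'\subset G_{1}*_{A}G_{2}$ and $\epsilon>0$, I would use amalgamated product normal forms to cover $S'$ by finite sets $S_{i}\subset G_{i}$; apply $C^{*}$-residual freeness of each $G_{i}$ to get homomorphisms $\phi_{i}:G_{i}\to\F_{i}$ with the norm bound on $S_{i}$; and then amalgamate them into $\phi:=\phi_{1}*_{A}\phi_{2}:G_{1}*_{A}G_{2}\to\F_{1}*_{\phi_{1}(A)=\phi_{2}(A)}\F_{2}=:\F$. This step requires arranging compatibility $\phi_{1}|_{A}=\phi_{2}|_{A}$, together with primitivity of each $\phi_{i}(A)$ in $\F_{i}$ (so that $\F$ is again a free group). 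The desired norm bound on $\phi$ would then follow from the theory of reduced amalgamated free products of $C^{*}$-algebras over amenable subalgebras (in the spirit of Voiculescu, Boca, and Dykema), which gives $C^{*}_{\mathrm{red}}(G_{1}*_{A}G_{2})=C^{*}_{\mathrm{red}}(G_{1})*_{C^{*}_{\mathrm{red}}(A)}C^{*}_{\mathrm{red}}(G_{2})$, and through which the norm estimates on the $\phi_{i}$ propagate.

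The main obstacle is the compatibility step in the gluing lemma: arranging $\phi_{1}|_{A}=\phi_{2}|_{A}$ while simultaneously preserving the inductive norm bounds on both factors and maintaining primitivity of each $\phi_{i}(A)$ in $\F_{i}$. A secondary challenge is rigorously propagating the norm bounds through the reduced amalgamated free product construction; the surface-with-boundary case additionally involves several simultaneous cyclic amalgamations, requiring iterated application of the gluing lemma.
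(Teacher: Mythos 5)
Your reduction to subgroups of towers and the base cases are fine, but the proof hinges entirely on the proposed gluing lemma, and as formulated it has two gaps that I think are fatal rather than merely technical. First, the group-theoretic step already breaks for the abelian floors $G_{2}=A\times\Z^{n}$ (and note these are amalgamations over maximal abelian subgroups $A$, which need not be cyclic): any homomorphism from $A\times\Z^{n}$ to a free group has cyclic image, so you cannot choose $\phi_{2}$ injective on a large ball of $G_{2}$ and simultaneously have $\phi_{1}|_{A}=\phi_{2}|_{A}$ produce a genuine (injective) amalgam $\F_{1}*_{\phi(A)}\F_{2}$; the pushout you actually get is a quotient of $\F_{1}*\F_{2}$ that is generally not an amalgam of the images and not free. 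Second, even where the target is free, the norm estimate does not propagate the way you hope: the identification $C_{r}^{*}(G_{1}*_{A}G_{2})\cong C_{r}^{*}(G_{1})*_{C_{r}^{*}(A)}C_{r}^{*}(G_{2})$ is taken with respect to the canonical conditional expectations onto $C_{r}^{*}(A)$, and a non-injective group homomorphism $\phi_{i}:G_{i}\to\F_{i}$ does not intertwine $E_{A}$ with $E_{\phi_{i}(A)}$ (elements of $G_{i}\setminus A$ can land in $\phi_{i}(A)$). Without expectation-preserving maps, the Voiculescu--Boca--Dykema framework gives no comparison of norms between the two amalgams; what you are implicitly invoking is a stability-of-strong-convergence theorem for amalgamated free products, which is not available off the shelf and is itself at the level of difficulty of the theorem you are trying to prove.

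The paper's route avoids amalgamated $C^{*}$-products entirely. It first proves a quantitative residual-freeness statement (Proposition \ref{prop:distortion}): for each $r$ there is $\phi:\Gamma\to\F$ injective on $B_{Y}(r)$ with $|\phi(g)|_{X}\leq Cr^{D}$, i.e.\ polynomial distortion. The inductive step through an extension of centralizers $\Gamma*_{A}(A\times\langle t\rangle)$ is handled not by amalgamating maps to free groups but by Baumslag's twisting trick: compose the retraction $\pi(t)=e$ with a large power of the automorphism $\tau(t)=ta$ and then with a map $\Gamma\to\F$, using a quantitative version of Baumslag's lemma to certify injectivity on a ball. The $C^{*}$-estimate then follows from this by purely elementary means: write $\|\lambda(a)\|^{2m}=\|\lambda((a^{*}a)^{m})\|$, note that injectivity on a ball of radius $2mR$ makes the $\ell^{2}$-norms of $(a^{*}a)^{m}$ and its image agree, and apply Haagerup's inequality; the polynomial distortion bound makes the resulting factor $[C(2mR)^{D}]^{3/4m}\to1$. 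If you want to salvage your approach, the thing to replace your gluing lemma with is exactly such a quantitative injectivity-plus-distortion statement for a single map down to a free group.
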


The converse to Theorem \ref{thm:Limit-groups-are-C-free} does not
hold: Example \ref{exa:amenable-extensions} shows that $\Z\times\F$
is $C^{*}$-residually free, but it is easy to see that it is not
FRF. It is, however, also easy to see that it is residually free.
Furthermore, the group 
\[
\langle a,b,c\thinspace|\thinspace b^{-1}=aba^{-1},\ [c,b]\rangle
\]
is $C^{*}$-residually free by Example \ref{exa:amenable-extensions}
since it is an extension of the free group $\langle b,c\rangle$ by
$\Z\cong\langle a\rangle$. On the other hand, it is not even residually
free since it contains an embedded Klein bottle subgroup $\langle a,b\rangle$.
It is an interesting question, not pursued here, to give some alternative
characterization of a group being $C^{*}$-residually free. 

Given a free group $\F$, and a basis $X$ of $\F$, we write $|f|_{X}$
for the word length of $f$ in the basis $X$. In any discrete group
$\Gamma$ with generating set we write $B_{Y}(r)$ for the elements
of $\Gamma$ that can be written a product of as most $r$ elements
of $Y\cup Y^{-1}$. The proof of Theorem \ref{thm:Limit-groups-are-C-free}
relies on the following key proposition. 
\begin{prop}
\label{prop:distortion}Let $\Gamma$ be a limit group with a fixed
finite generating set $Y$. There is $D=D(\Gamma,Y)>0$ and $C=C(\Gamma,Y)>0$
such that for any $r>0$ there is an epimorphism $f:\Gamma\to\F$
with $\F$ free, which is injective on $B_{Y}(r)$, and a basis $X$
of $\F$ such that
\[
\max_{g\in B_{Y}(r)}|f(g)|_{X}\leq Cr^{D}.
\]
\end{prop}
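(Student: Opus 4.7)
The plan is to invoke the structure theory of limit groups due to Kharlampovich--Myasnikov (and independently Sela): every finitely generated limit group $\Gamma$ embeds in an iterated centralizer extension of a free group, i.e.\ in a group $H_h$ obtained from a free group $H_0$ by successive amalgamations
\[
H_i = H_{i-1} *_{A_i} \bigl(A_i \times \Z^{m_i}\bigr), \qquad i = 1, \ldots, h,
\]
where each $A_i$ is a maximal (free abelian) subgroup of $H_{i-1}$, malnormal by the CSA property of $H_{i-1}$. I fix generating sets $Y_i$ for $H_i$ consisting of a generating set for $H_{i-1}$ together with new stable letters $t_{i,1}, \ldots, t_{i,m_i}$; the inclusion $\Gamma \into H_h$ then sends $B_Y(r)$ into $B_{Y_h}(C_0 r)$ for some $C_0 = C_0(\Gamma, Y)$.

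At each level I define a family of retractions $\psi_i^n : H_i \to H_{i-1}$, parametrised by $n \in \N$, which fix $H_{i-1}$ pointwise and send each stable letter $t_{i,j}$ to $a_{i,j}^n$ for some fixed integer basis $a_{i,1}, \ldots, a_{i,m_i}$ of $A_i$. This gives the length estimate $|\psi_i^n(w)|_{Y_{i-1}} \leq C_1 n \cdot |w|_{Y_i}$. The heart of the argument is a quantitative injectivity statement: there exists $M_i > 0$ such that $\psi_i^n$ is injective on $B_{Y_i}(\ell)$ for all $n > M_i \ell$. I would prove this via Bass--Serre / Britton normal form analysis for the amalgamation: a nontrivial reduced expression $w = g_0 t^{\vec{k}_1} g_1 \cdots t^{\vec{k}_s} g_s$, with $g_j \notin A_i$ in interior positions and $\vec{k}_j \neq 0$, is carried to $g_0 a^{n\vec{k}_1} g_1 \cdots a^{n \vec{k}_s} g_s \in H_{i-1}$; the malnormality of $A_i$ in $H_{i-1}$ obstructs the collapse of this alternating product to the identity once $n$ exceeds the maximum $A_i$-exponent occurring in the $g_j$'s, which is itself bounded linearly by $|w|_{Y_i}$.

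Given a target radius $r$, I iterate the construction: set $\ell_h = C_0 r$ and, descending from $i = h$ to $1$, pick $n_i = O(\ell_i)$ so that $\psi_i^{n_i}$ is injective on $B_{Y_i}(2\ell_i)$ (the factor $2$ converts difference-of-two-elements to a single element argument), then set $\ell_{i-1} = C_1 n_i \ell_i = O(\ell_i^2)$. After $h$ iterations one obtains $\ell_0 \leq C r^{2^h}$, so the composition $\Psi := \psi_1^{n_1} \circ \cdots \circ \psi_h^{n_h} : H_h \to H_0$ is injective on the image of $B_Y(r)$ and satisfies $|\Psi(g)|_{Y_0} \leq C r^{2^h}$ there.

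Let $f : \Gamma \to H_0$ be the restriction of $\Psi$. To convert $f$ into an epimorphism, replace the codomain by $\F := f(\Gamma)$, which is free by Nielsen--Schreier. A convenient basis $X$ of $\F$ is read off the Stallings folding of $f(Y) \subset H_0$: with $T$ a spanning tree of the core graph, the non-tree edges form a basis $X$, and since traversing the folded graph contributes an $X$-letter only when crossing a non-tree edge, one has $|g|_X \leq |g|_{Y_0}$ for every $g \in \F$. Combining, $|f(g)|_X \leq C r^D$ with $D = 2^h$ and $C = C(\Gamma, Y)$. The main obstacle is the quantitative injectivity step at each level, which requires the Britton analysis to yield a genuinely linear-in-$\ell$ threshold for $n$ in the presence of higher-rank amalgamating subgroups $A_i$; once secured, the hierarchical composition of bounded depth $h = h(\Gamma)$ converts linear bounds into the claimed polynomial one.
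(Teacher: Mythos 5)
Your overall architecture is the same as the paper's: embed $\Gamma$ in an iterated extension of centralizers, retract one level at a time by sending the stable letters to large powers of an element of the amalgamated subgroup, and compose. But there is a genuine gap at the step you yourself flag as ``the main obstacle'': the claim that $\psi_i^n$ is injective on $B_{Y_i}(\ell)$ once $n>M_i\ell$, with $M_i$ a constant. Britton's lemma only gives you the reduced form of $w$ \emph{in the amalgam} $H_i$; the question of whether its image $g_0a^{n\vec k_1}g_1\cdots a^{n\vec k_s}g_s$ is trivial is a question \emph{inside the vertex group} $H_{i-1}$, where Britton says nothing. Malnormality of $A_i$ (CSA) only gives you the qualitative hypothesis that $[a,g_j]\neq e$ for the interior syllables; it does not by itself bound how large the powers $a^{n\vec k_j}$ must be before the product is forced to be nontrivial. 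That is a quantitative ``big powers'' lemma in $H_{i-1}$, which for $i\geq 2$ is not a free group, and the phrase ``the maximum $A_i$-exponent occurring in the $g_j$'s'' has no a priori meaning there without further geometric input (relative hyperbolicity, acylindricity, or an action on a real tree). As written, the crucial estimate is asserted, not proved.

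The paper closes exactly this gap differently, and at a price. It proves a quantitative version of G.~Baumslag's lemma \emph{in the free group}, via a careful count of long edges in the tree of cancellations: if $w=\prod u^{k_i}b_i=e$ with $\min_i|k_i|>(8n+2)\max_i\{1,|b_i|/|u|\}$ then some $[u,b_i]=e$. To apply it at level $i$, one composes the retraction with the map $f:H_{i-1}\to\free$ already constructed by induction (chosen injective on a ball of radius $L=2(r+|a|)$ so that $f(a)$ and $f(v_j)$ still fail to commute), and the relevant lengths in the threshold are then the \emph{free-group} lengths $|f(v_j)|_X\leq d_{Y_{i-1}}(L)$, which are polynomial in $r$, not linear. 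Consequently the exponent must be taken of size $m\asymp r\cdot d_{Y_{i-1}}(L)$, the recursion on degrees is $D\mapsto 2D+2$ rather than your $D\mapsto 2D$, and the final degree is $2^{n+2}-2^n-2$ rather than $2^h$. (The paper also needs a QI lemma, using Alibegovic's quasiconvexity of maximal abelian subgroups, to convert arbitrary words into normal forms with only linear length loss --- a point you skip.) Your concluding Stallings-folding step to replace $f$ by an epimorphism onto its image with $|g|_X\leq|g|_{Y_0}$ is fine. If you could actually establish a linear-in-$\ell$ threshold intrinsically in $H_{i-1}$ you would obtain a better exponent than the paper's, but that statement is the entire content of the proposition at each level and cannot be taken for granted.
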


\subsection{Further consequences I: Spectral gaps}

A hyperbolic surface is a complete Riemannian surface (without boundary)
of constant curvature -1 . Given a hyperbolic surface $X$, we write
$\Delta_{X}$ for the Laplace-Beltrami operator on $L^{2}(X)$. If
$X$ is closed this operator's spectrum $\spec(\Delta_{X})$ consists
of eigenvalues $0=\lambda_{0}(X)\leq\lambda_{1}(X)\leq\cdots\leq\lambda_{k}(X)\leq\cdots$
with $\lambda_{k}(X)\to\infty$ as $k\to\infty$. It was a conjecture
of Buser \cite{Buser2} whether there exist a sequence of closed hyperbolic
surfaces $X_{i}$ with genera tending to infinity and with

\[
\lambda_{1}(X_{i})\to\frac{1}{4}
\]
where $\lambda_{1}$ denotes the first non-zero eigenvalue of the
Laplacian. The value $\frac{1}{4}$ is the asymptotically optimal
one by a result of Huber \cite{Huber}. See \cite[Introduction]{HM21}
for an overview of the rich history of this problem. Buser's conjecture
was settled in \cite{HM21}. The proof therein does not allow us to
take the surfaces to be arithmetic, and requires the surfaces to have
very short curves. The results of this work in conjunction with the
ideas in \cite{HM21} allow us, along with Hide, to prove: 
\begin{thm}
\label{thm:spectral-gap-surface}There exists a sequence of closed
arithmetic hyperbolic surfaces $\left\{ X_{i}\right\} _{i\in\mathbb{N}}$
with $g(X_{i})\to\infty$, systoles uniformly bounded away from zero,
and with 
\[
\lambda_{1}(X_{i})\to\frac{1}{4}.
\]
In fact the $X_{i}$ can be taken to be covering spaces of a fixed
arithmetic hyperbolic surface $X$. 
\end{thm}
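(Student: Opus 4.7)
The plan is to deduce Theorem~\ref{thm:spectral-gap-surface} by feeding the strong convergence of permutation representations provided by Corollary~\ref{cor:surfaces} into the spectral-transfer machinery of the Hide--Magee appendix (a variant of the method of \cite{HM21}), applied not to a topological surface but to a \emph{fixed arithmetic} base. In outline, I would: (i) select once and for all a closed arithmetic hyperbolic surface $X$ whose Laplacian satisfies $\lambda_{1}(X)\geq \tfrac14$ (for instance a small-genus arithmetic surface such as the Bolza surface, whose first eigenvalue is well above $\tfrac14$); (ii) apply Corollary~\ref{cor:surfaces} to $\Gamma=\pi_{1}(X)$ to obtain homomorphisms $\phi_{i}:\Gamma\to S_{N_{i}}$ whose composition with $\mathrm{std}:S_{N_{i}}\to \U(N_{i}-1)$ strongly converges to $\lambda_{\Gamma}$; and (iii) let $X_{i}\to X$ be the $N_{i}$-sheeted cover associated to $\phi_{i}^{-1}(\mathrm{Stab}(1))$.

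The systole bound is automatic: a closed geodesic on $X_{i}$ projects to a closed geodesic (possibly traversed multiply) of equal length on $X$, so $\mathrm{sys}(X_{i})\geq \mathrm{sys}(X)>0$. The genus grows because $2-2g(X_{i})=N_{i}(2-2g(X))$. Arithmeticity of $X_{i}$ is inherited automatically from that of $X$. Thus only the spectral conclusion requires work.

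For the spectral statement, the key point is the orthogonal decomposition
\[
L^{2}(X_{i}) \;=\; L^{2}(X)\;\oplus\; L^{2}(X;\mathcal{E}_{i}),
\]
where $\mathcal{E}_{i}$ is the flat unitary bundle on $X$ associated with $\mathrm{std}\circ\phi_{i}$, and this decomposition is $\Delta_{X_{i}}$-invariant. The ``old'' summand contributes only eigenvalues $\geq\lambda_{1}(X)\geq\tfrac14$, so everything comes down to showing that for every $\epsilon>0$, the spectrum of the twisted Laplacian $\Delta_{X;\mathcal{E}_{i}}$ on the ``new'' summand misses $[0,\tfrac14-\epsilon]$ for all sufficiently large $i$. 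Combined with Huber's asymptotic upper bound $\lambda_{1}(X_{i})\leq \tfrac14+o(1)$, this yields $\lambda_{1}(X_{i})\to\tfrac14$.

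The main obstacle is therefore this final assertion, which is the content of the appendix. The strategy there is to express a spectral projector (or resolvent) of $\Delta_{X;\mathcal{E}_{i}}$ on the interval $[0,\tfrac14-\epsilon]$ as a $\Gamma$-equivariant convolution operator built from a suitably chosen radial test function on $\mathbb{H}^{2}$, whose operator norm on $L^{2}(X;\mathcal{E}_{i})$ equals the norm of an element of $\C[\Gamma]$ acted on by $\mathrm{std}\circ\phi_{i}$. Strong convergence forces this norm to converge to the corresponding norm in $\lambda_{\Gamma}$, which acts on $L^{2}(\mathbb{H}^{2})$ whose Laplacian spectrum is exactly $[\tfrac14,\infty)$; hence the limiting norm is zero, and the spectral projector in question vanishes for large $i$. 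Making this transfer quantitative (and accommodating the non-compact nature of the spectral parameter via an approximation by compactly supported test functions on $\Gamma$) is the hard technical step, and is precisely where the Haagerup--Thorbj{\o}rnsen-style strong convergence from Theorem~\ref{thm:main-analytical-theorem}, rather than just the weaker Benjamini--Schramm limit used in many earlier works, is indispensable.
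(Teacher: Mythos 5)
Your outline follows the paper's proof essentially step for step: fix a closed arithmetic base surface $X$ with no Laplacian eigenvalues in $(0,\tfrac14)$, pass to the covers $X_i$ attached to the strongly convergent permutation representations of $\pi_1(X)$, note that the systole bound, genus growth and arithmeticity are automatic, decompose $L^2(X_i)$ into the old part and the new part $L^2_{\phi_i}(\mathbb{H};V^0_{N_i})$, and transfer the spectral gap of $\mathbb{H}$ to the new part by writing a truncated resolvent parametrix whose error term is a finitely supported convolution operator over $\Gamma$ controlled by strong convergence. This is exactly the appendix's argument.

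One substantive imprecision in your reduction: the error operator is not ``an element of $\C[\Gamma]$ acted on by $\mathrm{std}\circ\phi_i$'' but an operator of the form $\sum_{\gamma\in S}a_\gamma\otimes\rho_i(\gamma^{-1})$ whose coefficients $a_\gamma$ are \emph{compact operators} on $L^2(F)$, so the scalar strong convergence of Corollary \ref{cor:surfaces} that you invoke does not apply directly. You need the matrix-coefficient upgrade, Corollary \ref{cor:matrix-coefficients}, which the paper derives separately from uniqueness of the tracial state on $C_r^*(\Gamma)$ together with a result of Male, and which is the version actually fed into the appendix. Relatedly, after truncation the limiting norm on $L^2(\mathbb{H})$ is not zero but merely small (at most $\tfrac18$ for suitable $T$), which suffices to invert $1+\mathbb{L}^{(T)}_{\mathbb{H},i}(s)$ and hence to exclude new eigenvalues below $\tfrac14-\epsilon$; insisting on an exactly vanishing spectral projector runs into the non-compact support of its kernel, as you partly anticipate. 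Finally, for the arithmetic base the paper does not use the Bolza surface (whose bound $\lambda_1>\tfrac14$ rests on certified numerics) but the surface attached to a maximal order in the quaternion algebra of discriminant $15$, where the absence of eigenvalues in $(0,\tfrac14)$ follows unconditionally from Jacquet--Langlands together with Huxley's theorem for $Y_0(15)$.
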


Theorem \ref{thm:spectral-gap-surface} is proved in the Appendix\footnote{In fact, the Appendix proves a more general statement about coverings
of \emph{any }hyperbolic surface; see Theorem \ref{thm:Main Theorem}.} by the second named author (MM) and Hide, as a consequence of the
following corollary of Theorem \ref{thm:main-analytical-theorem}.
\begin{cor}[Matrix coefficients version of Theorem \ref{thm:main-analytical-theorem}]
\label{cor:matrix-coefficients}Let $\Gamma$ be a limit group. There
exist a sequence of finite dimensional representations $\rho_{i}$
such that for any $r\in\N$ and finitely supported map $a:\Gamma\to\mathrm{Mat}_{r\times r}(\C)$,
we have 
\[
\lim_{i\to\infty}\|\sum_{\gamma\in\Gamma}a(\gamma)\otimes\rho_{i}(\gamma)\|=\|\sum_{\gamma\in\Gamma}a(\gamma)\otimes\lambda(\gamma)\|.
\]
The norm on the left hand side is the operator norm for the tensor
product of ($r$ and $N_{i}$-dimensional) $\ell^{2}$ norms. The
norm on the right is the operator norm for the tensor product of $\ell^{2}$
and the inner product on $\ell^{2}(\Gamma)$. 

If $\Gamma$ is non-abelian then the $\rho_{i}$ can be taken of the
form (\ref{eq:factoring}).
\end{cor}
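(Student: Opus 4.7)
The plan is to revisit the proof of Theorem~\ref{thm:main-analytical-theorem} and observe that both of its main ingredients---$C^{*}$-residual freeness of limit groups (Theorem~\ref{thm:Limit-groups-are-C-free}) and strong convergence of random permutation representations of free groups (Bordenave--Collins)---admit matrix-valued strengthenings, and that the combined argument then produces the desired sequence $\rho_{i}$. The point is that matrix amplification of strong convergence is \emph{not} automatic in general, since the full $C^{*}$-algebra of a non-abelian free group is not nuclear; one must verify that each ingredient carries matrix coefficients.

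First, I would prove a matrix-valued strengthening of Theorem~\ref{thm:Limit-groups-are-C-free}: for any finite $S \subset \Gamma$, any $r \in \N$, any $\epsilon > 0$, and any $a \colon \Gamma \to \mathrm{Mat}_{r\times r}(\C)$ supported on $S$ with $\sum_{s \in S}\|a(s)\| = 1$, there exists a homomorphism $\phi \colon \Gamma \to \F$ to a free group such that
\[
\bigl\| \sum_{s} a(s) \otimes \lambda_{\F}(\phi(s)) \bigr\| \leq \bigl\| \sum_{s} a(s) \otimes \lambda_{\Gamma}(s) \bigr\| + \epsilon.
\]
The scalar proof of Theorem~\ref{thm:Limit-groups-are-C-free} should carry over verbatim with $C^{*}_{r}(\Gamma)$ replaced by $\mathrm{Mat}_{r\times r}(\C) \otimes C^{*}_{r}(\Gamma)$ throughout: Fell continuity of induction and the weak-containment arguments (as in Example~\ref{exa:amenable-extensions}) are functorial in the coefficient algebra, and the key quantitative input, Proposition~\ref{prop:distortion}, is entirely coefficient-independent. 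Second, I would invoke the matrix-coefficient form of Bordenave--Collins for the free group $\F$: for finitely supported $b \colon \F \to \mathrm{Mat}_{r\times r}(\C)$ and random permutation representations $\rho_{N} \colon \F \to S_{N} \xrightarrow{\std} \U(N-1)$, one has
\[
\bigl\| \sum_{f} b(f) \otimes \rho_{N}(f) \bigr\| \longrightarrow \bigl\| \sum_{f} b(f) \otimes \lambda_{\F}(f) \bigr\|
\]
almost surely as $N \to \infty$. This matrix-coefficient form is built into their argument.

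Given any $a, S, r, \epsilon$, I would apply the first step to obtain $\phi \colon \Gamma \to \F$ realizing the bound up to $\epsilon/2$, then apply the second step to $\F$ (with $b$ the pushforward of $a$ under $\phi$) to obtain some $\rho_{N}$ realizing the amplified operator norm up to $\epsilon/2$. The composition $\rho_{N} \circ \phi$ then has the factoring~(\ref{eq:factoring}) and gives the overall bound up to $\epsilon$. A standard diagonal argument over a countable dense family of triples $(S, r, a)$ extracts a single universal sequence $\{\rho_{i}\}$ witnessing the corollary. The main obstacle is the first step: while the representation-theoretic steps (Fell continuity, weak containment) are manifestly functorial in the coefficient algebra, one must check that no geometric or combinatorial step in the proof of Theorem~\ref{thm:Limit-groups-are-C-free} implicitly uses the scalar nature of coefficients---routine given the coefficient-independence of Proposition~\ref{prop:distortion}, but requiring careful bookkeeping.
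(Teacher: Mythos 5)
Your route is genuinely different from the paper's, and as written it has a gap in its first step. The paper does not re-run the proof of Theorem \ref{thm:Limit-groups-are-C-free} with matrix coefficients at all. Instead it argues softly: since $C_{r}^{*}(\Gamma)$ has a unique tracial state (Lemma \ref{lem:simple}), scalar strong convergence automatically forces weak convergence, i.e. $\trace(\rho_{i}(z))/N_{i}\to\tau(z)$ (Lemma \ref{lem:strong-implies_weak}, proved by an ultraproduct argument), and then scalar strong convergence together with weak convergence upgrades to strong convergence with matrix coefficients by \cite[Prop.~7.3]{Male}. In particular your framing remark is backwards: matrix amplification \emph{is} essentially automatic here (the relevant property is exactness, not nuclearity, and the upgrade is exactly Male's result once weak convergence is known); what is not automatic, and what the paper supplies, is the weak convergence.

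The concrete gap is in your claim that the proof of Theorem \ref{thm:Limit-groups-are-C-free} ``carries over verbatim'' because Fell continuity and weak containment are functorial in the coefficient algebra. Those tools appear only in Example \ref{exa:amenable-extensions}; they play no role in the proof of Theorem \ref{thm:Limit-groups-are-C-free}. That proof hinges on Haagerup's inequality (Lemma \ref{lem:Haagerup-iunequality}) and the reverse inequality (\ref{eq:reverse-inequality}), both scalar statements comparing an operator norm with the $\ell^{2}$ norm of coefficients, and amplifying them is exactly the point you defer. It can be done --- e.g. the crude bound $\|\sum_{g}a(g)\otimes\lambda(g)\|\leq\sum_{j,k}\|\sum_{g}a(g)_{jk}\lambda(g)\|$ costs only a factor $r^{2}$, harmless after taking $2m$-th roots, and (\ref{eq:reverse-inequality}) has an analogue via the tracial state $\frac{1}{r}\Tr\otimes\tau$ --- but citing the coefficient-independence of Proposition \ref{prop:distortion} does not address it. Your second ingredient is fine (Bordenave--Collins \cite{BordenaveCollins} do prove strong convergence with matrix coefficients), and the closing diagonalization is routine; so your skeleton is workable, but the justification offered for the key amplification step points at the wrong part of the proof, and the paper's trace-plus-\cite{Male} argument avoids the issue entirely.
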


The proof of Corollary \ref{cor:matrix-coefficients} from Theorem
\ref{thm:main-analytical-theorem} is explained in $\S$\ref{sec:Proof-of-matrix-coefficients-version}.

\subsection{Further consequences II: $\protect\Ext(C_{r}^{*}(\Gamma))$ is not
a group}

In \cite{BrownDouglasFillmore1,BrownDouglasFillmore2}, Brown, Douglas,
and Fillmore introduced and studied a homological/K-theoretic invariant
$\Ext(\A)$ of a unital separable $C^{*}$-algebra $\A$. By definition,
$\Ext(\A)$ is the collection of $*$-homomorphisms 
\[
\pi:\A\to B(\ell^{2}(\N))/\K
\]
modulo conjugation of unitary operators on $\ell^{2}(\N)$, where
$B(\ell^{2}(\N))$ is the bounded operators on $\ell^{2}(\N)$ and
$\K$ is the ideal of compact operators therein. This is naturally
a semigroup with multiplication arising from $(\pi_{1},\pi_{2})\mapsto\pi_{1}\oplus\pi_{2}$
composed with an isomorphism $\ell^{2}(\N)\oplus\ell^{2}(\N)\cong\ell^{2}(\N)$. 

One of the motivations of the work of Haagerup and Thorbjørnsen \cite{HaagerupThr}
was to prove that there are non-invertible elements of $\Ext(C_{r}^{*}(\F))$
when $\F$ is a finitely generated non-abelian free group, i.e., $\Ext(C_{r}^{*}(\F))$
is not a group.

The passage from the existence of strongly convergent unitary representations
of $\F$ to this statement uses the following result proved by Voiculescu
in \cite[\S\S 5.14]{voic_quasidiag} (see \cite[Rmk. 8.6]{HaagerupThr}
for another exposition).
\begin{prop}
\label{prop:ext-condition-prop}If $\Gamma$ is a discrete, countable,
non-amenable group with a sequence of finite dimensional unitary representations
that strongly converge to the regular representation of $\Gamma$,
then $\Ext(C_{r}^{*}(\Gamma))$ is not a group.
\end{prop}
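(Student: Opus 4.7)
The plan is to construct an explicit element $[\pi]\in\Ext(C_{r}^{*}(\Gamma))$ from the strongly convergent sequence $\{\rho_{i}\}$, and to obstruct its invertibility via non-amenability, following \cite[\S5.14]{voic_quasidiag}.

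For the construction, set $H\eqdf\bigoplus_{i}\C^{N_{i}}\cong\ell^{2}(\N)$, form the block-diagonal map $\tilde\rho\eqdf\bigoplus_{i}\rho_{i}:\C[\Gamma]\to B(H)$, and let $q:B(H)\to B(H)/\K(H)$ denote the Calkin quotient. A subtle point is that $\tilde\rho$ itself need not extend to a $*$-homomorphism on all of $C_{r}^{*}(\Gamma)$, since $\sup_{i}\|\rho_{i}(z)\|$ may strictly exceed $\|\lambda_{\Gamma}(z)\|$ for early indices; however, modulo compacts the supremum becomes a $\limsup$, and strong convergence gives
\[
\|(q\circ\tilde\rho)(z)\|=\limsup_{i\to\infty}\|\rho_{i}(z)\|=\|\lambda_{\Gamma}(z)\|\qquad(z\in\C[\Gamma]).
\]
Hence $q\circ\tilde\rho$ descends to a unital isometric $*$-homomorphism $\pi:C_{r}^{*}(\Gamma)\into B(H)/\K(H)$, which represents the candidate class $[\pi]\in\Ext(C_{r}^{*}(\Gamma))$.

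To rule out invertibility, I would use the general fact from Ext-theory that $[\pi]$ is invertible in $\Ext(C_{r}^{*}(\Gamma))$ if and only if $\pi$ admits a unital completely positive lift $\Phi:C_{r}^{*}(\Gamma)\to B(H)$ with $q\circ\Phi=\pi$. Assuming for contradiction that such a $\Phi$ exists, the asymptotically block-diagonal structure of $\pi$ together with Voiculescu's quasidiagonal-extension machinery yields a sequence of finite-dimensional CCP maps $\psi_{n}:C_{r}^{*}(\Gamma)\to M_{d_{n}}(\C)$ that are asymptotically isometric and asymptotically multiplicative on finite subsets -- in other words, they exhibit $C_{r}^{*}(\Gamma)$ as a quasidiagonal $C^{*}$-algebra. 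By Rosenberg's theorem, quasidiagonality of $C_{r}^{*}(\Gamma)$ for a countable discrete group $\Gamma$ forces $\Gamma$ to be amenable, contradicting the hypothesis.

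The main obstacle is the middle step: converting a hypothetical CP lift $\Phi$ of $\pi$ into genuine quasidiagonality of $C_{r}^{*}(\Gamma)$, i.e., promoting the asymptotically block-diagonal form of $\Phi$ into simultaneously asymptotically isometric and asymptotically multiplicative finite-rank approximations. This is the technical heart of \cite[\S5.14]{voic_quasidiag}, and all the group-theoretic work is encoded there; with that step in hand, Rosenberg's theorem delivers the contradiction and the construction of $[\pi]$ plus the checking of its Ext-class are essentially formalities.
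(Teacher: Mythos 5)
Your outline is correct and is essentially the same argument the paper relies on: the paper offers no proof of its own for this proposition but cites Voiculescu's \S 5.14 (with Haagerup--Thorbj\o rnsen, Rmk.~8.6, as an alternative exposition), and that proof proceeds exactly as you describe --- form the block-diagonal extension $\pi=q\circ\bigoplus_i\rho_i$, use Arveson's criterion that invertibility of $[\pi]$ is equivalent to the existence of a unital completely positive lift, compress a hypothetical lift by the block projections (using that the lift differs from $\bigoplus_i\rho_i$ by a compact on $\C[\Gamma]$) to exhibit $C_r^*(\Gamma)$ as quasidiagonal, and invoke Rosenberg's theorem to contradict non-amenability.
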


Since non-abelian limit groups $\Gamma$ are $C^{*}$-simple (Lemma
\ref{lem:simple}), they are non-amenable. Indeed, an amenable group
$\Gamma$ has a $C^{*}$-algebra morphism $C_{r}^{*}(\Gamma)\to\C$
by \cite[Thm. F.4.4]{BdlHV} whose kernel contradicts simplicity.
Hence combining Theorem \ref{thm:main-analytical-theorem} with Proposition
\ref{prop:ext-condition-prop} we obtain the following extension of
`$\Ext(C_{r}^{*}(\F))$ is not a group':
\begin{cor}
If $\Gamma$ is a non-abelian limit group, then $\Ext(C_{r}^{*}(\Gamma))$
is not a group.
\end{cor}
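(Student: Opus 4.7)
The plan is to apply Proposition \ref{prop:ext-condition-prop} directly to $\Gamma$. That proposition requires three inputs: that $\Gamma$ be discrete and countable, that $\Gamma$ be non-amenable, and that $\Gamma$ admit a sequence of finite dimensional unitary representations strongly converging to $\lambda_\Gamma$. The first is immediate, since limit groups are finitely generated and treated as abstract discrete groups. The third is exactly the conclusion of Theorem \ref{thm:main-analytical-theorem}.

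The only step that needs an argument is non-amenability. Here I would invoke Lemma \ref{lem:simple}, which states that non-abelian limit groups are $C^{*}$-simple. Were such a $\Gamma$ amenable, the trivial representation would be weakly contained in the regular representation, so by \cite[Thm.~F.4.4]{BdlHV} the augmentation $\gamma \mapsto 1$ would extend to a $*$-homomorphism $C_{r}^{*}(\Gamma) \to \C$; its kernel would be a proper, nonzero, two-sided closed ideal of $C_{r}^{*}(\Gamma)$, contradicting simplicity. Hence $\Gamma$ is non-amenable.

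With all three hypotheses of Proposition \ref{prop:ext-condition-prop} in hand, the conclusion that $\Ext(C_{r}^{*}(\Gamma))$ is not a group follows at once. There is no substantive obstacle in this step itself: the hard work has already been carried out in the proof of Theorem \ref{thm:main-analytical-theorem} and in the cited $C^{*}$-simplicity lemma. The only conceptual subtlety, and the reason the corollary excludes abelian limit groups, is that the abelian ones are free abelian and hence amenable, so Proposition \ref{prop:ext-condition-prop} simply does not apply to them; the assumption "non-abelian" in the statement is used precisely to route around this.
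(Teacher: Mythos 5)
Your proposal is correct and matches the paper's argument exactly: non-amenability is deduced from $C^{*}$-simplicity (Lemma \ref{lem:simple}) via the augmentation character, and then Theorem \ref{thm:main-analytical-theorem} and Proposition \ref{prop:ext-condition-prop} are combined precisely as the paper does. No gaps.
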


\subsection*{Acknowledgments}

We thank Beno\^{i}t Collins, Simon Marshall, and Dan Voiculescu for
comments around this work.

This project has received funding from the European Research Council
(ERC) under the European Union\textquoteright s Horizon 2020 research
and innovation programme (grant agreement No 949143).

\section{Background}

\subsubsection*{Groups }

We write $e$ for the identity in any group. For any group $\Gamma$,
$\C[\Gamma]$ denotes the group algebra of $\Gamma$ with complex
coefficients. For a free group $\F$ with a fixed set of generators
$\X,$ for each $h\in\F$, we write $|h|_{\X}$ for the reduced word
length of $h$ with respect to $X$. If $Y$ is a symmetric generating
set of any group $\Gamma$, we write $B_{Y}(r)\subset\Gamma$ for
the elements of $\Gamma$ that can be written as the product of at
most $r$ elements of $Y$.

\subsubsection*{Analysis }

Given a discrete group $\Gamma$, $\lambda_{\Gamma}:\Gamma\to\End(\ell^{2}(\Gamma))$
is the \emph{left regular representation} 
\[
\lambda_{\Gamma}(g)[f](h)\eqdf f(g^{-1}h).
\]
This representation extends by linearity to one of the convolution
algebra $\ell^{1}(\Gamma$). For $\psi\in\ell^{1}(\Gamma)$, since
$\lambda$ is unitary we have the basic inequality

\begin{equation}
\|\lambda(\psi)\|\leq\|\psi\|_{\ell^{1}}\label{eq:basic-inequality}
\end{equation}
where the norm on the left is operator norm. The \emph{reduced $C^{*}$-algebra}
of $\Gamma$, denoted $C_{r}^{*}(\Gamma)$, is the closure of $\lambda(\ell^{1}(\Gamma))$
with respect to the operator norm topology. A \emph{tracial state
}on a unital $C^{*}$ algebra $\mathcal{A}$ is a linear functional
$\tau$ such that $\tau(1)=1$, $\tau(a^{*}a)\geq0$ (in particular,
is real) for all $a\in\A$ , and $\tau(ab)=\tau(ba)$ for all $a,b\in\A$.

An important inequality due to Haagerup \cite{Haagerup} links the
operator norm in $\End(\ell^{2}(\F))$ and the $\ell^{2}$ norm in
$\C[\F]$.
\begin{lem}[Haagerup]
\label{lem:Haagerup-iunequality}Let $X$ denote a finite generating
set for a free group $\F$. Suppose that $a\in\C[\F]$ is supported
on $B_{X}(r)$. Then 
\[
\|\lambda_{\F}(a)\|\leq(r+1)^{\frac{3}{2}}\|a\|_{\ell^{2}}.
\]
\end{lem}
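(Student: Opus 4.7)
The plan is to follow Haagerup's classical argument. I would first split $a = \sum_{k=0}^{r} a_k$, where $a_k$ denotes the restriction of $a$ to the sphere $S_k \eqdf \{g \in \F : |g|_X = k\}$, and reduce the lemma to the sphere-by-sphere estimate
\[
\|\lambda_\F(c)\| \leq (k+1)\|c\|_{\ell^2} \quad \text{for every } c \in \C[\F] \text{ supported on } S_k.
\]
Granting this, the triangle inequality together with Cauchy--Schwarz, orthogonality $\|a\|_{\ell^2}^2 = \sum_{k}\|a_k\|_{\ell^2}^2$, and $\sum_{k=0}^{r}(k+1)^2 \leq (r+1)^3$ would yield
\[
\|\lambda_\F(a)\| \leq \sum_{k=0}^{r}(k+1)\|a_k\|_{\ell^2} \leq (r+1)^{3/2}\|a\|_{\ell^2}.
\]

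To establish the sphere-by-sphere estimate, I would prove the equivalent convolution inequality $\|c * b\|_{\ell^2} \leq (k+1)\|c\|_{\ell^2}\|b\|_{\ell^2}$ for $c$ supported on $S_k$ and arbitrary $b \in \ell^2(\F)$. Decomposing $b = \sum_{m \geq 0} b_m$ with $b_m$ supported on $S_m$, free reduction ensures that for $x \in S_k$ and $y \in S_m$ the product $xy$ lies in $S_{k+m-2j}$ for some cancellation depth $j \in \{0,1,\ldots,\min(k,m)\}$. Writing $(c * b_m)_j$ for the restriction of $c * b_m$ to $S_{k+m-2j}$, one regroups
\[
c * b \;=\; \sum_{j=0}^{k}\Bigl(\sum_{m \geq j}(c * b_m)_j\Bigr).
\]
For each fixed $j$, the summands in the inner sum are supported on pairwise disjoint spheres, hence $\ell^2$-orthogonal. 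Granting the key estimate $\|(c * b_m)_j\|_{\ell^2} \leq \|c\|_{\ell^2}\|b_m\|_{\ell^2}$, Pythagoras then bounds the inner sum by $\|c\|_{\ell^2}\|b\|_{\ell^2}$, and summing over the $k+1$ admissible values of $j$ via the triangle inequality produces the factor $(k+1)$.

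The substantive step --- where the free group structure is genuinely used --- is the combinatorial estimate $\|(c * b_m)_j\|_{\ell^2} \leq \|c\|_{\ell^2}\|b_m\|_{\ell^2}$. Each $h \in S_{k+m-2j}$ has a unique reduced factorization $h = uw$ with $u \in S_{k-j}$, $w \in S_{m-j}$, and the pairs $(x,y) \in S_k \times S_m$ with $xy = h$ at cancellation depth $j$ are precisely those of the form $x = uv$, $y = v^{-1}w$ with $v \in S_j$ compatible with the non-cancellation constraints at the two seams. I would apply Cauchy--Schwarz in $v$ to the identity $(c * b_m)(h) = \sum_v c(uv)\,b_m(v^{-1}w)$ and then sum the result over $(u,w) \in S_{k-j} \times S_{m-j}$. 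Because each reduced word $x \in S_k$ factors uniquely as $x = uv$ with $u \in S_{k-j}$ and $v \in S_j$ (this is just uniqueness of geodesics in the Cayley tree of $\F$), and similarly for $y$, the resulting double sum factorizes to exactly $\|c\|_{\ell^2}^2 \|b_m\|_{\ell^2}^2$. The main obstacle is really only the bookkeeping around the non-cancellation constraints, but since dropping them can only enlarge the sum, they do not affect the desired upper bound, and the remainder of the argument is entirely elementary.
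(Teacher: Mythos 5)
Your argument is correct and matches the paper's: both reduce to the sphere-by-sphere bound $\|\lambda_{\F}(c)\|\leq(k+1)\|c\|_{\ell^{2}}$ and then apply Cauchy--Schwarz with $\sum_{k=0}^{r}(k+1)^{2}\leq(r+1)^{3}$. The only difference is that the paper simply cites Haagerup's Lemma 1.4 for the sphere estimate, whereas you supply its (standard and correctly executed) proof.
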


\begin{proof}
Haagerup in \cite[Lemma 1.4]{Haagerup} proved that 
\[
\|\lambda_{\F}(a)\|\leq\sum_{i=0}^{\infty}(i+1)\|a_{i}\|_{\ell^{2}}
\]
where $a_{i}$ is the function $a$ multiplied pointwise by the indicator
function of $B_{X}(i)\backslash B_{X}(i-1)$, i.e. the sphere of radius
$i$. If $a$ is supported on $B_{X}(r)$ then using Cauchy-Schwarz
above gives the result, since $\sum_{i=0}^{r}\|a_{i}\|_{\ell^{2}}^{2}=\|a\|_{\ell^{2}}^{2}$.
\end{proof}
There is also a more basic inequality in the reverse direction that
holds for arbitrary discrete groups. Suppose that $\Gamma$ is a discrete
group. Then let $\delta_{e}\in\ell^{2}(\Gamma)$ denote the indicator
function of the identity. We have for $a\in\C[\Gamma]$
\begin{equation}
\|a\|_{\ell^{2}}^{2}=\langle\lambda(a)\delta_{e},\lambda(a)\delta_{e}\rangle\leq\|\lambda_{\Gamma}(a)\|^{2}.\label{eq:reverse-inequality}
\end{equation}

\section{Proof of Proposition \ref{prop:distortion}}

The proof of Proposition \ref{prop:distortion} relies on the deep
fact that any limit group embeds in an iterated extension of centralizers
of a free group, and quantified versions of theorems of Gilbert and
Benjamin Baumslag.
\begin{defn}
Let $\Gamma$ be a limit group, $A<\Gamma$ a maximal abelian subgroup.
A group $\Gamma'=\Gamma*_{A}B$, $B=A\times\langle t\rangle$ is an
\emph{extension of centralizers} of $\Gamma$. A group $\Gamma$ is
an \emph{iterated extension of centralizers} if there is a chain of
subgroups 
\[
\free=\Gamma_{0}<\Gamma_{1}<\dotsb<\Gamma_{n}=\Gamma
\]
such that $\Gamma_{i+1}$ is an extension of centralizers of $\Gamma_{i}$.
The \emph{height} of the extension is $n$.
\end{defn}

Any iterated extension of centralizers is fully residually free, and
so are their finitely generated subgroups, hence such subgroups are
limit groups. Amazingly, the converse holds: any limit group actually
embeds in a (finitely) iterated extension of centralizers. This was
first claimed by Kharlampovich and Myasnikov in their papers on the
Tarski problem~\cite[Theorem 4]{kmii}. For a proof following Sela
see~\cite[Theorem~4.2]{cg}. The forward implication seems to be
contained in Lyndon's original paper on his free exponential group~\cite[last two paragraphs, page 533]{lyndon_parametric},
which is the direct limit over the family of all iterated extensions
of centralizers of $\free$, ordered by inclusion. See also~\cite[Theorem~C1]{bmr}.

Let $\Gamma$ be a limit group with some fixed generating set $Y$.
The \emph{distortion function of $\Gamma$ with respect to $Y$} is
the function 
\[
d_{Y}(r)=\min_{\substack{f\colon\Gamma\to\free\\
X\subset\F
}
}\max_{g\in B_{Y}(r)}\vert f(g)\vert_{X}\,,
\]
where the minimum is over all $f\colon\Gamma\to\free$ which are injective
on $B_{Y}(r)$ and $X$ which are bases of $\F$. The proof of Proposition
\ref{prop:distortion} is a recapitulation of the proof that an iterated
extension of centralizers is fully residually free in a way that lets
us bound the distortion function by a polynomial whose degree depends
on the height. We start with an improvement of Baumslag's power lemma.
\begin{lem}[{cf. \cite[Proposition 1]{GBaumslag}}]
\label{quantitative baumslag} Let $u,b_{1},\dotsc,b_{n}$, reduced
words in $\free$, with $u$ also cyclically reduced, nontrivial,
and not a proper power of another element. If 
\begin{equation}
w=\prod_{i=0}^{n}u^{k_{i}}b_{i}=e\label{eq:word}
\end{equation}
for 
\[
\min_{i>0}\{\vert k_{i}\vert\}>(8n+2)\cdot\max_{i\geq0}\{1,\vert b_{i}\vert/\vert u\vert\}\,,
\]
then $\left[u,b_{i}\right]=e$ for some $i$. 
\end{lem}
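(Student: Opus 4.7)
My plan is to argue by contraposition: I will assume $w = e$ together with $[u, b_i] \neq e$ for every $i \in \{0, 1, \ldots, n\}$, and I will derive a contradiction.

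The first step will be to put each $b_i$ into a ``$u$-normal form.'' Since $u$ is cyclically reduced and not a proper power, its centralizer in $\F$ is $\langle u \rangle$, so $[u, b_i] \neq e$ means $b_i \notin \langle u \rangle$. I will write each $b_i$ uniquely in reduced form as
\[
b_i = u^{a_i} v_i u^{c_i},
\]
with $a_i, c_i \in \Z$ chosen maximally so that $v_i$ has neither $u$ nor $u^{-1}$ as a prefix or suffix; our hypothesis forces $v_i \neq e$, and the visible $u$-powers give $|a_i|, |c_i| \leq |b_i|/|u| \leq M$, where I set $M := \max_i \max(1, |b_i|/|u|)$.

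Substituting these normal forms into $w$ and combining adjacent $u$-powers, I will rewrite
\[
w = u^{A_0} v_0 u^{A_1} v_1 \cdots u^{A_n} v_n u^{A_{n+1}},
\]
with $A_i = c_{i-1} + k_i + a_i$ for $1 \leq i \leq n$. The hypothesis on $|k_i|$ then yields $|A_i| \geq |k_i| - 2M > 8nM \geq 2$ in the middle positions. This expression is not yet reduced, but since each $v_i$ neither begins nor ends with $u^{\pm 1}$, the cancellation between $v_i$ and either neighboring $u$-block can consume strictly fewer than $|u|$ letters on each side. Consequently, each middle $u^{A_i}$ ($1 \leq i \leq n$) contributes a subword of length at least $|A_i||u| - 2(|u|-1) > 0$ to the reduced form of $w$, so $w$ is nontrivial, contradicting $w = e$.

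The main technical obstacle will be verifying the rigidity of the $u$-normal form (which uses the non-proper-power hypothesis to identify the centralizer of $u$ as $\langle u \rangle$ and to make the decomposition unique) and, more subtly, ruling out \emph{chain cancellations} that might propagate through multiple $v_i$'s in the reduction. The latter is controlled by showing that each middle $u^{A_i}$ is long enough that the cancellation regions from its two sides cannot meet in the middle. The constant $(8n+2)$ appearing in the statement is much larger than what this argument strictly requires (roughly $2+2/|u|$ copies of $u$ per middle block), providing convenient slack rather than a tight bound.
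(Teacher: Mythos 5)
Your strategy (a direct cancellation count on a $u$-normal form) is genuinely different from the paper's proof, which proceeds by induction on $n$ using the tree of cancellations of $w$, an Euler-characteristic count of ``long edges,'' and the primitivity of $u$ to pin down the periodic structure along a long edge. But your version has a real gap at its only substantive step, the word ``Consequently.'' Knowing that each pairwise reduction $u^{A_i}\cdot v_i$ and $v_i\cdot u^{A_{i+1}}$ cancels fewer than $\vert u\vert$ letters does not bound the total cancellation absorbed by $u^{A_i}$ in the reduced form of $w$, because a block $v_i$ can be \emph{entirely consumed} by its two neighbours even though each side individually eats fewer than $\vert u\vert$ of its letters. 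Concretely, take $v_i=\overline{u_2}\,\overline{u_1'}$ where $u=u_1u_2=u_1'u_2'$ with $u_2$ a proper suffix and $u_1'$ a proper prefix of $u$ (bars denote inverses); such a $v_i$ satisfies all of your normal-form conditions, yet $u^{A_i}v_iu^{A_{i+1}}$ collapses to $u^{A_i-1}u_1\cdot u_2'u^{A_{i+1}-1}$, and now two subwords of $u^{\infty}$ in different phases are in direct contact and may go on cancelling. (For $u=aba^{-1}c$ and $v_i=c^{-1}aab^{-1}a^{-1}$ one computes $uv_iu=abc$: the cancellation chains straight through $v_i$.) Your remark that the chain is ``controlled by showing that each middle $u^{A_i}$ is long enough that the cancellation regions from its two sides cannot meet in the middle'' misses the point: the length of $u^{A_i}$ is not in question; what is unproved is any upper bound on the size of each cancellation region once a $v_i$ has vanished.

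To close the gap you must bound the secondary cancellation between $u^{A_i-1}u_1$ and $u_2'u^{A_{i+1}-1}$, and this is exactly where the hypothesis that $u$ is not a proper power must be used for more than identifying its centralizer: a long overlap there would force a common factor of length about $2\vert u\vert$ between $u^{\infty}$ and a shifted or inverted copy of $u^{\infty}$, which by a Fine--Wilf/Lyndon--Sch\"utzenberger argument (the same $pq=qp$ fact the paper invokes via \cite[Lemma~2.2]{razborov}) contradicts primitivity. With such a lemma your bookkeeping does go through, but the guaranteed per-end loss of a middle block is then on the order of $3\vert u\vert$ rather than the ``roughly $2+2/\vert u\vert$ copies of $u$'' you assert, so your closing claim that $8n+2$ is generous slack is also not yet justified. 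You should also treat $n=0$ separately, since your argument produces no middle block in that case.
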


G. Baumslag proved the same thing if $w=e$ for infinitely many integral
values of each of the $k_{i}$. See also the proof of~\cite[Lemma~4.13]{wilton_exercises},
which has, implicitly, an effective version of Lemma~\ref{quantitative baumslag}
in it.
\begin{proof}
The proof is by induction on $n$. Clearly for $n=0$, if $u^{k_{0}}b_{0}=e$
then $b_{0}$ is a power of $u$ and hence commutes with $u$.

We begin by manipulating our hypothesis to a more convenient form
for the induction. If 
\[
\min_{i>0}\{\vert k_{i}\vert\}>(8n+2)\cdot\max_{i\geq0}\{1,\vert b_{i}\vert/\vert u\vert\}
\]
then 
\begin{equation}
|w|>\vert u\vert\cdot\sum_{i>0}\vert k_{i}\vert>(8n+2)\cdot\max_{i\geq0}\{\vert u\vert,\vert b_{i}\vert\}.\,\label{eq:modw-lower-bound}
\end{equation}
(Here $|w|$ is the non-reduced length of $w$.)

Let $T$ be the tree of cancellations for $w$. A vertex of $T$ is
\emph{special} if it either corresponds to an endpoint of one of the
subwords $u^{k_{i}}$, one of the $b_{i}$, or has valence at least
three. An embedded segment in $T$ with special endpoints and no special
vertices in its interior is a \emph{long edge}.

Every valence one vertex in $T$ is special, so there are at most
$2n+2$ of them. We now work out the maximal number of long edges
in a tree with at most $2n+2$ valence one vertices, which will happen
when the number $q_{\geq3}$ of vertices of valence at least 3 is
maximized. Let $q_{m}$ be the number of valence $m$ vertices in
$T$. Then 
\[
1=\chi(T)=\sum q_{m}(1-m/2)\leq\frac{2n+2}{2}-\frac{1}{2}q_{\geq3}
\]
implies $q_{\ge3}\leq2n$, there are at most $4n+2$ special vertices,
and there are at most $4n+1$ long edges.

The sum of the lengths of the long edges is $\vert w\vert/2$, so
there is a long edge of length at least $\vert w\vert/(8n+2)$, which
from (\ref{eq:modw-lower-bound}) is at least
\[
\max_{i\geq0}\{\vert u\vert,\vert b_{i}\vert+1\}\,.
\]
If this is the case, since the endpoints of the $b_{i}$ are special,
the long edge is covered only by subsegments of powers of $u$. Because
$u$ is not a proper power, the segment (with a fixed direction) corresponds
to a unique reduced expression of the form $u_{0}u^{a}u_{1}$ where
$u_{0}$ and $u_{1}$ are proper subwords of $u$ and $a>0$. (Otherwise,
one is led to the conclusion that $u$ can be written as a reduced
product of reduced words $u=pq=qp$, and by \cite[Lemma~2.2]{razborov},
this contradicts $u$ being a proper power.) Let us now fix the direction
of the long edge so $a>0$.

The upshot of this unique expression is that the term $u_{0}$ corresponds
to a terminal subsegment of a $u$ as written in (\ref{eq:word})
(part of a $u^{k_{i}}$ with $k_{i}>0$), for each time the long edge
is traversed in its given direction. If the long edge is traversed
in the other way by the path of $w$, then the $u_{0}$ segment corresponds
to an initial subsegment of a $u^{-1}$ in a $u^{k_{i}}$ with $k_{i}<0$.

Fix an endpoint $v$ of the $u_{0}$ segment in the long edge. Consider
the subpaths of the path of $w$ punctuated by returns to $v$. After
cutting the tree at $v$, there must be at least one $b_{i}$ subpath
on either half of the resulting forest. So there must be some closed
subpath of $w$ beginning and ending at $v$ and corresponding, possibly
after cyclic rotation of $w$, to a subsequence
\[
u^{k_{0}}b_{0}u^{k_{1}}b_{1}\dotsb u^{k_{j}-a}\underbrace{u^{a}b_{j}u^{k_{j+1}}\dotsb u^{k_{l}}b_{l}u^{c}}_{\downarrow}u^{k_{l+1}-c}b_{l+1}\dotsb u^{k_{n}}b_{n}
\]
with $l-j<n$, and
\[
u^{a}b_{j}u^{k_{j+1}}\dotsb u^{k_{l}}b_{l}u^{c}=e\,,
\]
which implies
\[
u^{a+c}b_{j}u^{k_{j+1}}\dotsb u^{k_{l}}b_{l}=e.
\]
Reducing $a+c$, we can use the inductive hypothesis to conclude that
for some $j$, $[u,b_{j}]=e$. (Note that this is where the minimum
of $k_{i}$ only over $i>0$ is useful in the induction; $a+c$ could
in principle be very small.)
\end{proof}
A similar result holds when $u$ is not necessarily cyclically reduced
and is a power, e.g., $u=ps^{l}p^{-1}$, with $s$ cyclically reduced
and $|u|=2|p|+l|s|$. Rewrite the expression for $w$ as 
\[
e=w=\prod ps^{lk_{i}}p^{-1}b_{i}
\]
conjugate by $p^{-1}$, and absorb the $p$'s into the $b$'s to get
\[
e=w'=\prod s^{lk_{i}}b_{i}'\,.
\]
Then the same conclusion clearly holds when 
\[
l\min_{i\geq0}\{\vert k_{i}|\}>(8n+2)\cdot\max_{i\geq0}\{1,(\vert b_{i}\vert+2\vert p\vert)/\vert s\vert\}\,.
\]
For the applications, since $|u|=l|s|+2|p|>2|p|$ so we can use instead
the easier to use yet still sufficient inequality
\begin{equation}
\min_{i\geq0}\{\vert k_{i}\vert\}\geq(8n+2)\cdot\max_{i\geq0}\{\vert b_{i}\vert+\vert u\vert\}\,,\label{the inequality}
\end{equation}
which gives the same conclusion. Note that this minimum of $k_{i}$
is now over all $i$, and not just $i>0$. (The latter was just more
convenient for the previous induction.)

In what follows, $\Gamma$ is a limit group with a fixed finite generating
set $Y$, $A$ is a maximal abelian subgroup in $\Gamma$, and $\Gamma'$
is the extension of centralizers $\Gamma'=\Gamma*_{A}B$, where $B=A\times\langle t\rangle$.
Let $Y'=Y\cup t$. The standard fact about amalgamated products lets
us write any element of $\Gamma'$ in normal form: 
\[
\prod_{i=0}^{n}\beta_{i}\gamma_{i}
\]
with $\gamma_{i}\in\Gamma$, $\beta_{i}\in B$, so that the only elements
which are allowed to be trivial are $\beta_{0}$ and $\gamma_{n}$,
and if any of them are in $A$ then the expression has length one
-- if, say, $\beta_{i}\in A$, then $\gamma_{i}\beta_{i}\gamma_{i+1}\in\Gamma$
and we group these together. Normal forms are unique (up to insertion
of $\alpha\alpha^{-1}$ pairs), but we will not use this fact. Given
an element as above, write each $\beta_{i}$ (uniquely) as $t^{n_{i}}\alpha_{i}$
with $\alpha_{i}\in A$ 
\[
\prod_{i=0}^{n}t^{n_{i}}\alpha_{i}\gamma_{i}
\]
and absorb the $\alpha_{i}$ into an adjacent $\gamma_{i}$ or $\gamma_{i+1}$
to get 
\[
\prod_{i=0}^{n}t^{n_{i}}v_{i}\:
\]
with each $v_{i}\in\Gamma$. If this isn't possible, leave it alone.
In this case the word is of the form 
\[
t^{n}\alpha\,,
\]
for some $\alpha\in A$. For the purposes of this argument, a word
is in normal form if it is of either of these two types.
\begin{lem}[QI lemma]
 \label{qi lemma} There is a constant $K$ such that if $w$ is
a word in $Y'$, then there is a word $w'=_{\Gamma}w$ in normal form
such that $\vert w'\vert\leq K\cdot\vert w\vert$. 
\end{lem}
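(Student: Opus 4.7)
The plan is to parse $w$ into alternating $Y$-syllables and $t$-syllables and then iteratively absorb any interior $Y$-syllable that happens to represent an element of $A$, using that $A$ commutes with $t$. First I would parse $w = y_1 \cdots y_m$ by grouping maximal runs of letters of the same kind, obtaining
\[
w = g_0 t^{\epsilon_1} g_1 t^{\epsilon_2} \cdots t^{\epsilon_k} g_k,
\]
where each $g_i$ is a (possibly empty) word in $Y$ and each $\epsilon_j \in \Z \setminus \{0\}$ records a maximal run of $t^{\pm 1}$ letters of consistent sign. By construction $|w| = \sum_i |g_i|_Y + \sum_j |\epsilon_j|$, and this expression is already of the sought form; the only possible obstruction to being a genuine normal form is that some interior syllable $g_i$ (with $0 < i < k$) may represent an element of the maximal abelian subgroup $A \le \Gamma$.

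The reduction step would be: while some interior $g_i$ lies in $A$, use $[g_i, t] = 1$ to rewrite $t^{\epsilon_i} g_i t^{\epsilon_{i+1}}$ as $g_i t^{\epsilon_i + \epsilon_{i+1}}$, and then concatenate the $Y$-words $g_{i-1}$ and $g_i$ into a single syllable. If $\epsilon_i + \epsilon_{i+1} \neq 0$ this merges the two adjacent $t$-blocks and drops the syllable count $k$ by one; if $\epsilon_i + \epsilon_{i+1} = 0$ the central $t$-block vanishes entirely, so $g_{i+1}$ is also concatenated in, dropping $k$ by two. Either way, the total $Y$-length $\sum_i |g_i|_Y$ is preserved (concatenation of subwords adds no new letters) and the $t$-exponent sum $\sum_j |\epsilon_j|$ is non-increasing (it only shrinks when opposite-sign $t$-blocks meet). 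Hence the total word-length in $Y'$ is monotonically non-increasing, and since $k$ strictly decreases at each step, the procedure halts after at most $k$ iterations.

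At termination we have a word $w'$ in one of the two normal form shapes: either $k = 0$ and $w' = g_0 \in \Gamma$, or $k \ge 1$ and no interior syllable lies in $A$, giving $w' = \prod t^{n_i} v_i$ with $v_i \notin A$ for interior $i$. In the degenerate situation where all reductions collapse to a single $t$-block flanked by $A$-syllables, one uses once more that $A$ centralizes $t$ to push the $A$-factors through the $t$-power and rewrite in the form $t^n \alpha$ with $\alpha \in A$. Monotonicity then gives $|w'| \le |w|$, so in fact any $K \ge 1$ suffices. The only real subtlety, rather than obstacle, is bookkeeping the edge cases: empty outer syllables, merged $Y$-syllables that themselves land back in $A$ and trigger another pass of reduction, and the collapse to the $t^n \alpha$ form. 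All of these are automatically handled by the strict monotonicity of the syllable count and the termination of the iteration.
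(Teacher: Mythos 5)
Your argument is essentially correct, but it takes a genuinely different route from the paper. The paper's proof is top--down: it rewrites the subwords of $w$ representing elements of $A$ and controls the length of the result by invoking Alibegovic's theorem that maximal abelian subgroups of limit groups are quasiconvex, so that the constant $K$ is the distortion constant of $A\hookrightarrow\Gamma'$. You instead run the amalgamated-product reduction explicitly at the level of words, using only that $A$ centralizes $t$ in $B=A\times\langle t\rangle$, and observe that each merge concatenates $Y$-syllables (preserving the $Y$-letter count) and adds $t$-exponents (so $\sum_j|\epsilon_j|$ is non-increasing), while the syllable count strictly drops, forcing termination. This is more elementary --- no quasiconvexity input is needed --- and it yields the sharper conclusion $K=1$, which is more than the lemma requires; the paper's softer argument has the advantage of not needing to track the combinatorics of the rewriting at all.

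One point you should make explicit rather than leave to the ``edge cases'' sentence: the normal form of the paper requires not only that \emph{interior} $\Gamma$-syllables avoid $A$ but also that the first one does (in $\prod\beta_i\gamma_i$ only $\beta_0$ and $\gamma_n$ are exempt, so when $k\geq 1$ the syllable $g_0$ must lie outside $A$ as well; this matters downstream, since the Baumslag lemma is applied with $b_0=f(v_0)$ and needs $[a,v_0]\neq e$). Your loop as stated only tests interior syllables. The fix is the same commutation $g_0t^{\epsilon_1}=t^{\epsilon_1}g_0$ followed by concatenation into $g_1$, iterated if the merged syllable again lands in $A$, and it preserves your monotonicity; but it should be part of the main loop, not an afterthought. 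With that adjustment the proof is complete.
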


\begin{proof}
The word $w$ can be put in normal form by replacing subwords which
are elements of $A$ in one step. Since $A$ is maximal abelian in
$\Gamma$ it is quasiconvex by \cite[Theorem~3.4]{alibegovic}, and
the rewritten word can only increase in length by a factor of $K$,
where $1/K$ is the shrinking factor of the embedding $A\into\Gamma'$. 
\end{proof}
\begin{lem}[{cf. \cite[Lemma~7, Theorem~8]{baumslag_residually_free}}]
 \label{inductive step} Let $K$ be the constant from Lemma \ref{qi lemma},
and fix $a\in A\backslash\{e\}$. Then 
\[
d_{Y'}(r/K)\leq(8r^{2}+4r)d_{Y}(2(r+\vert a\vert))^{2}\,.
\]
If $d_{Y}(r)$ is a polynomial of degree $D$ then $d_{Y'}(r)$ is
bounded above by a polynomial of degree $2D+2$. 
\end{lem}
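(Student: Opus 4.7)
The plan is to construct, for each $r \ge 1$, a homomorphism $\phi_{N}\colon \Gamma' \to \F$ to a free group that has trivial kernel on $B_{Y'}(r/K)$ and whose image on $B_{Y'}(r/K)$ has word length in some basis bounded by $(8r^{2}+4r)\,d_{Y}(R)^{2}$, where $R \eqdf 2(r+|a|)$. Start from a witness $f\colon\Gamma\to\F$ attaining the value $d_{Y}(R)$, together with its basis $X$. Define $\phi_{N}$ by $\phi_{N}|_{\Gamma}=f$ and $\phi_{N}(t)=f(a)^{N}$, with integer $N$ to be chosen. Since $f(A)$ is an abelian subgroup of the free group $\F$, it is cyclic, so $f(a)^{N}$ lies in the centralizer of $f(A)$ and the defining relations of $B=A\times\langle t\rangle$ are respected; hence $\phi_{N}$ extends to a well-defined homomorphism from $\Gamma'=\Gamma *_{A} B$.

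For $g\in B_{Y'}(r/K)\setminus\{e\}$, Lemma~\ref{qi lemma} supplies a normal form $g=\prod_{i=0}^{n}t^{n_{i}}v_{i}$ of $Y'$-length at most $r$, with $|n_{i}|\ge 1$ for $i\ge 1$ and $v_{i}\in\Gamma\setminus A$ for $0\le i\le n-1$. If the terminal $v_{n}$ happens to lie in $A$, absorb it into $v_{n-1}$ using that $A$ commutes with $\langle t\rangle$ in $B$; the new $v_{n-1}$ stays outside $A$ because $A$ is a subgroup, and $v_{n}$ is replaced by $e$. Then $\phi_{N}(g)=\prod u^{N n_{i}}f(v_{i})$ with $u\eqdf f(a)$. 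If $v_{n}=e$, first conjugate the word by $u^{N n_{n}}$ so that the trivial trailing factor disappears, leaving a product indexed by $i=0,\ldots,n-1$ with an adjusted front exponent $k_{0}=N(n_{0}+n_{n})$. In either case, $\min_{i>0}|N n_{i}|\ge N$, so setting $N$ just above $(8r+2)\,d_{Y}(R)$ satisfies the hypothesis of Lemma~\ref{quantitative baumslag}; the extension via inequality~(\ref{the inequality}) handles the case where $u$ is not cyclically reduced or is a proper power.

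If $\phi_{N}(g)=e$, the Baumslag lemma forces $[u,f(v_{j})]=e$ in $\F$ for some index $j$ with $v_{j}\notin A$. Translating back, the commutator $v_{j}av_{j}^{-1}a^{-1}$ is in $\ker f$; since its $Y$-length is at most $2(r+|a|)=R$ and $f$ is injective on $B_{Y}(R)$, it must be trivial, forcing $v_{j}\in C_{\Gamma}(a)$. The CSA property of limit groups now gives $C_{\Gamma}(a)=A$, contradicting $v_{j}\notin A$. Hence $\phi_{N}(g)\ne e$. The word-length estimate follows from $|\phi_{N}(g)|_{X}\le N r\,d_{Y}(R)+(r+1)\,d_{Y}(R)\le (8r^{2}+4r)\,d_{Y}(R)^{2}$, and the polynomial degree statement is a direct substitution of $d_{Y}(r)=O(r^{D})$ together with $R=O(r)$. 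The principal obstacle will be the endpoint case $v_{n}\in A$, which requires the absorption and conjugation manoeuvre above; once that is in place, the argument reduces to a uniform application of the quantitative power lemma combined with the CSA property.
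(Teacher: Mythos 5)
Your construction coincides with the paper's: the homomorphism $\phi_{N}$ (extend $f$ by $t\mapsto f(a)^{N}$) is exactly the map $h=f\circ\pi\circ\tau^{m}$ used in the paper with $m=N$, the injectivity argument is the same application of the quantitative power lemma with $u=f(a)$, $b_{i}=f(v_{i})$, $k_{i}=Nn_{i}$, the non-commutation $[f(a),f(v_{i})]\neq e$ is extracted in the same way from injectivity of $f$ on $B_{Y}(2(r+|a|))$ together with $C_{\Gamma}(a)=A$, and the length bookkeeping and reduction via Lemma \ref{qi lemma} also match.

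There is, however, one case your argument does not reach: $g$ whose normal form is a single $B$-syllable $t^{n_{0}}\alpha$ with $\alpha\in A$ and $n_{0}\neq0$ (the ``second type'' of normal form, e.g.\ $g=t$). Your absorption move needs a preceding block $v_{n-1}\notin A$, and your conjugation move leaves an empty product when $n=0$; Lemma \ref{quantitative baumslag} is then vacuous, since every factor of $\phi_{N}(g)=f(a)^{Nn_{0}}f(\alpha)$ commutes with $u$. One must argue nontriviality separately, as the paper does: $|f(a)^{Nn_{0}}|_{X}\geq N>d_{Y}(r-1)\geq|f(\alpha)|_{X}$, so the two factors cannot fully cancel (equivalently, both lie in the cyclic centralizer of $f(a)$ and the exponents cannot sum to zero). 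Two smaller points. First, after your cyclic conjugation the front exponent $N(n_{0}+n_{n})$ may be zero, which is permitted by Lemma \ref{quantitative baumslag} (minimum over $i>0$) but not by the stated form of (\ref{the inequality}) (minimum over all $i\geq0$); you need the $i>0$ variant of the extension to general $u$. Second, the hypothesis requires $N\geq(8n+2)\max_{i}\{|b_{i}|+|u|\}$ with the maximum bounded by $2d_{Y}(R)$, so $N$ should be taken about $(8r+4)d_{Y}(R)$ rather than $(8r+2)d_{Y}(R)$ in order to both satisfy the hypothesis and land on the bound $(8r^{2}+4r)d_{Y}(R)^{2}$.
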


This is essentially a version of B. Baumslag's generalization of G.
Baumslag's version of Lemma~\ref{quantitative baumslag} from free
groups to limit groups, where we keep track of the constants and avoid
the phrases ``sufficiently large'' and ``as large as we like.''
\begin{proof}
Let $\pi\colon\Gamma'\to\Gamma$ be the retraction to $\Gamma$ defined
by $\pi(t)=e$, let $\tau$ be the automorphism of $\Gamma'$ fixing
$\Gamma$ with 
\begin{align*}
\tau(t) & \eqdf ta.
\end{align*}
We will find an $h\colon\Gamma'\to\free$ which is injective on normal
forms in $Y'$ of length at most $r$ and doesn't stretch too much.

Suppose first that $g$ has the normal form
\[
\prod t^{n_{i}}v_{i}\,,
\]
which is a product of at most $\lfloor r/2\rfloor$ terms $t^{n_{i}}v_{i}$.
Then 
\[
f\circ\pi\circ\tau^{m}(g)=\prod f(a)^{mn_{i}}f(v_{i})\,.
\]
In order to use Lemma \ref{quantitative baumslag} we need to choose
$f$ so that $f(\left[a,v_{i}\right])\neq e$. In the worst case the
commutator $\left[a,v_{i}\right]$ has length at most 
\[
L\eqdf2(r+\vert a\vert).
\]
Choose $f\colon\Gamma\to\free$ and a basis $X$ of $\F$ such that
\[
d_{Y}(L)=\max_{g\in B_{Y}(L)}\vert f(g)\vert_{X}
\]
and $f$ embeds $B_{Y}(L)\subset\Gamma$. By~(\ref{the inequality}),
with $k_{i}=mn_{i}$, $n=\lfloor r/2\rfloor$, $u=f(a)$, and $b_{i}=f(v_{i})$,
as long as 
\[
\min\{\vert mn_{i}\vert\}\geq(4r+2)\cdot\max\{\vert f(v_{i})\vert_{X}+\vert f(a)\vert_{X}\}
\]
$f\circ\pi\circ\tau^{m}(g)$ is nontrivial. In the worst case $n_{i}=1$
for all $i$ and $f(a)$ and $f(v_{i})$ have length $d_{Y}(L)$,
so choose $m=m(r,\vert a\vert)=(4r+2)\cdot2d_{Y}(L)$ and let
\begin{align*}
h & =f\circ\pi\circ\tau^{m}.
\end{align*}
We continue to use the same basis $X$ for $\F$. Now overestimate
the length of $h(g)$: the normal form which can be expanded the most
is $t^{r}$, so we have $r\cdot m$ terms whose images have length
at most $d_{Y}(L)$, and therefore 
\[
\vert h(g)\vert_{X}\leq r\cdot\underbrace{(8r+4)\cdot d_{Y}(L)}_{m}\cdot\,d_{Y}(L)=(8r^{2}+4r)d_{Y}(L)^{2}\,.
\]

If $g$ is of the form $t^{n}\alpha$ then the worst that can happen
is $n=1$ and $\alpha$ has length at most $r-1$, $h(g)=f(a)^{m}f(\alpha)$,
but in this case 
\[
m\cdot d_{Y}(L)\geq m\cdot\vert f(a)\vert_{X}\geq\vert h(t)\vert_{X}\geq m>d_{Y}(r-1)\geq\vert f(\alpha)\vert_{X}\,,
\]
so $h(g)$ is nontrivial -- $h(t)$ and $f(\alpha)$ cannot fully
cancel since $m>d_{Y}(r-1)$, and is not longer than $m\cdot d_{Y}(L)+d_{Y}(r-1)$,
which is less than $r\cdot m\cdot d_{Y}(L)$.

Now by Lemma \ref{qi lemma} if $g\in B_{Y'}(r/K)$, it has a normal
form of length at most $r$ and $h(g)\neq e$, so $h$ embeds $B_{Y'}(r/K)$,
$\vert h(g)\vert_{X}\leq(8r^{2}+4r)d_{Y}(L)^{2}$, and the first part
of the lemma follows.

The statement about degrees is obvious since $L$ is linear in $r$. 
\end{proof}
\begin{cor}
Let $\Gamma$ be a limit group, and suppose $\Gamma$ embeds in an
extension of centralizers of height $n$. Then $d_{Y}(r)$ is bounded
above by a polynomial in $r$ of degree 
\[
D(n)=2^{n+2}-2^{n}-2\,.
\]
\end{cor}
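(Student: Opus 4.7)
The plan is to prove this by induction on the height $n$, using Lemma~\ref{inductive step} as the inductive step and the identity map on $\free$ as the base case. First, I would reduce to the case where $\Gamma$ is itself an iterated extension of centralizers of height $n$ (rather than merely embedding in one): if $\iota\colon\Gamma\into G$ is such an embedding, choose a finite generating set $Y'$ of $G$ containing $\iota(Y)$. Then $B_{Y}^{\Gamma}(r)\subseteq B_{Y'}^{G}(r)$, so any $f\colon G\to\free$ injective on $B_{Y'}^{G}(r)$ restricts to a homomorphism $\Gamma\to\free$ injective on $B_{Y}^{\Gamma}(r)$ with at most the same maximum word length in a basis $X$ of $\free$. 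This yields $d_{Y}^{\Gamma}(r)\leq d_{Y'}^{G}(r)$, so it suffices to bound the distortion function of $G$ itself.

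For the base case $n=0$, we have $G_{0}=\free$ with some basis $Y_{0}$, and the identity map witnesses $d_{Y_{0}}(r)\leq r$, a polynomial of degree $D(0)=1=2^{2}-2^{0}-2$. For the inductive step, suppose the distortion function of a height-$n$ iterated extension of centralizers is bounded above by a polynomial of degree $D(n)$. Any height-$(n+1)$ iterated extension has the form $\Gamma'=\Gamma\ast_{A}B$ where $\Gamma$ is a height-$n$ extension and $B=A\times\langle t\rangle$, and its natural generating set is $Y\cup\{t\}$. Lemma~\ref{inductive step} then immediately gives that its distortion function is bounded above by a polynomial of degree $2D(n)+2$.

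It remains to verify that the recursion $D(n+1)=2D(n)+2$ with initial value $D(0)=1$ has closed-form solution $D(n)=2^{n+2}-2^{n}-2$. Setting $E(n)\eqdf D(n)+2$ gives $E(n+1)=2E(n)$ with $E(0)=3$, so $E(n)=3\cdot2^{n}$ and $D(n)=3\cdot2^{n}-2=2^{n+2}-2^{n}-2$, matching the claimed formula.

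There is no real obstacle here beyond bookkeeping: the entire content is inductive assembly of Lemma~\ref{inductive step} with a careful initial reduction to an ambient iterated extension (so that the hypothesis of Lemma~\ref{inductive step} applies at each level). The only point to be mindful of is that Lemma~\ref{inductive step} states the bound for $d_{Y'}(r/K)$ rather than $d_{Y'}(r)$, but since $K$ is a constant depending only on the step, replacing $r$ by $Kr$ rescales the degree-$d$ polynomial bound by the constant $K^{d}$ and preserves the degree, so this does not affect the inductive computation of $D(n)$.
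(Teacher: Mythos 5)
Your proposal is correct and follows essentially the same route as the paper: induction on height via Lemma~\ref{inductive step} with base case $D(0)=1$, solving the recursion $D(n+1)=2D(n)+2$, and then passing from the ambient iterated extension of centralizers to the embedded subgroup $\Gamma$ by a linear rescaling of word lengths (which preserves polynomial degree, just as your handling of the $r/K$ issue does). The closed-form computation $D(n)=3\cdot 2^{n}-2=2^{n+2}-2^{n}-2$ checks out.
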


\begin{proof}
For height $0$, the distortion function is just $r$. Clearly by
Lemma~\ref{inductive step} and induction a polynomial of degree
$D(n)$ suffices. Now embed $\Gamma$ in an iterated extension of
centralizers of height $n$: 
\[
\Gamma\into\Gamma_{n}>\Gamma_{n-1}>\dotsb>\Gamma_{1}>\free\,.
\]
Since the embedding $\Gamma\into\Gamma_{n}$ expands lengths at most
linearly, $\Gamma$ has distortion function bounded above by a polynomial
of degree $D(n)$ as well. 
\end{proof}
Proposition \ref{prop:distortion} follows immediately.

\section{Proof of Theorem \ref{thm:Limit-groups-are-C-free}\label{sec:Proof-of-Theorem-limit-group}}

\begin{proof}[Proof of Theorem \ref{thm:Limit-groups-are-C-free}]
Fix a set $Y$ of generators of $\Gamma$. It suffices to prove the
theorem for the finite set $B_{Y}(R)$ for arbitrary $R>0$. We are
given $\epsilon>0$. Let $S_{Y}(R)\subset\C[\Gamma]$ denote the $\ell^{1}$-unit
sphere of the elements supported on $B_{Y}(R)$. Our task is to prove
that there is a homomorphism $\phi:\Gamma\to\F$ with $\F$ free such
that  
\begin{equation}
\|\lambda_{\F}(\phi(a))\|\leq\|\lambda_{\Gamma}(a)\|+\epsilon.\label{eq:goal}
\end{equation}
for all $a\in S_{Y}(R)$. The set $S_{Y}(R)$ is compact with respect
to the $\ell^{1}$ norm. Take a finite $\frac{\epsilon}{3}$-net $\{a_{i}\}_{i\in\mathcal{I}}$
for $S_{Y}(R)$ w.r.t. the $\ell^{1}$ norm. 

Due to the inequality (\ref{eq:basic-inequality}) and triangle inequality,
the functions $a\mapsto\|\lambda_{\F}(a)\|$ and $a\mapsto\|\lambda_{\Gamma}(a)\|$
are 1-Lipschitz on $S_{Y}(R)$ with respect to the $\ell^{1}$ norm
and hence if we can prove the existence of $\phi:\Gamma\to\F$ with
$\F$ free such that
\begin{equation}
\|\lambda_{\F}(\phi(a_{i}))\|\leq\|\lambda_{\Gamma}(a_{i})\|+\frac{\epsilon}{3}\label{eq:on-net}
\end{equation}
for all $i\in\I$ then (\ref{eq:goal}) will follow for all $a\in S_{Y}(R)$
as required. So we now set out to prove (\ref{eq:on-net}).

Let $C$ and $D$ be the constants from Proposition \ref{prop:distortion}.
Choose $m=m(\epsilon)\in\N$ large enough so that
\begin{equation}
[C(2mR)^{D}]^{\frac{3}{4m}}\leq1+\frac{\epsilon}{3}.\label{eq:m-choice}
\end{equation}
We apply Proposition \ref{prop:distortion} with $r=2mR$ to get an
epimorphism $\phi:\Gamma\to\F$ injective on $B_{Y}(2mR)$, and a
generating set $X$ of $\F$ such that 
\begin{equation}
\phi(B_{Y}(2mR))\subset B_{X}\left(C(2mR)^{D}\right).\label{eq:supports}
\end{equation}
 Let $b_{i}\eqdf\phi(a_{i})$ for each $i\in\I$. 

Note that 
\[
\|\lambda_{\Gamma}(a_{i})\|^{2m}=\|\lambda_{\Gamma}\left(a_{i}^{*}a_{i}\right)\|^{m}=\|\lambda_{\Gamma}\left(a_{i}^{*}a_{i}\right)^{m}\|
\]
and similarly, $\|\lambda_{\F}(b_{i})\|^{2m}=\|\lambda_{\F}\left(b_{i}^{*}b_{i}\right)^{m}\|$.
Each $\left(b_{i}^{*}b_{i}\right)^{m}$ is supported on $B_{X}(C(2mR)^{D})$
by (\ref{eq:supports}), hence by Haagerup's inequality (Lemma \ref{lem:Haagerup-iunequality})
we have
\begin{align*}
\|\lambda_{\F}(b_{i})\|^{2m} & =\|\lambda_{\F}\left(b_{i}^{*}b_{i}\right)^{m}\|\\
 & \leq[C(2mR)^{D}]^{\frac{3}{2}}\|\left(b_{i}^{*}b_{i}\right)^{m}\|_{\ell^{2}}\\
 & =[C(2mR)^{D}]^{\frac{3}{2}}\|\left(a_{i}^{*}a_{i}\right)^{m}\|_{\ell^{2}}\\
 & \leq[C(2mR)^{D}]^{\frac{3}{2}}\|\lambda_{\Gamma}(a_{i})\|^{2m}.
\end{align*}
The equality on the third line used that $\phi$ is injective on $B_{Y}(2mR)$,
and the final inequality used (\ref{eq:reverse-inequality}). Hence
\begin{align*}
\|\lambda_{\F}(b_{i})\| & \leq[C(2mR)^{D}]^{\frac{3}{4m}}\|\lambda_{\Gamma}(a_{i})\|\\
 & \leq\left(1+\frac{\epsilon}{3}\right)\|\lambda_{\Gamma}(a_{i})\|\leq\|\lambda_{\Gamma}(a_{i})\|+\frac{\epsilon}{3}
\end{align*}
by our choice of $m$ in (\ref{eq:m-choice}); the last inequality
used that $\|a_{i}\|_{\ell^{1}}=1$ and (\ref{eq:basic-inequality}).
\end{proof}

\section{Proof of Theorem \ref{thm:main-analytical-theorem}}

Here we split into cases when $\Gamma$ is abelian or not. Limit groups
cannot have torsion, so abelian limit groups are of the form $\Z^{r}$
for some $r\in\N$.

\subsection{Proof when $\Gamma=\protect\Z^{r}$}

The case when $\Gamma=\Z^{r}$ must be dealt with by hand here.
\begin{lem}
\label{lem:case-of-Z}Theorem \ref{thm:main-analytical-theorem} holds
when $\Gamma=\Z^{r}$. Moreover, for this sequence of representations
we can ensure 
\begin{equation}
\lim_{i\to\infty}\frac{\Tr(\rho_{i}(z))}{N_{i}}=\tau(z)\label{eq:trace_convergence}
\end{equation}
where $\tau(g)\eqdf\delta_{eg}$ is the delta function at the identity,
extended linearly to a tracial state on $C_{r}^{*}(\Z^{r})$. 
\end{lem}

\begin{proof}
Let $T^{r}\eqdf(S^{1})^{r}$ be the standard $r$-dimensional flat
torus. The Fourier transform gives an isomorphism of $C^{*}$-algebras
\[
\mathcal{F}\colon C_{r}^{*}(\Z^{r})\to C(T^{r}).
\]
For $q\in\N$ let $T_{q}^{r}$ denote the subtorus $(\Z/q\Z)^{r}\subset T^{r}$.
We obtain, via restriction and Fourier transform, a finite dimensional
representation 
\[
C_{r}^{*}(\Z^{r})\xrightarrow{\rho_{q}}C(T_{q}^{r})
\]
that restricts to finite dimensional unitary representation of $\Z^{r}$.
For any $z\in\C[\Z^{r}]$ we have 
\[
\|\rho_{q}(z)\|=\max_{x\in T_{q}^{r}}|\mathcal{F}[z](x)|\to\max_{x\in T^{r}}|\mathcal{F}[z](x)|=\|\lambda_{\Z^{r}}(z)\|
\]
as $q\to\infty$. We have only used here the fact that $T_{q}^{r}$
Hausdorff converges to $T^{r}$ as $q\to\infty$.

We also have

\[
\frac{\Tr(\rho_{q}(z))}{\dim\rho_{q}(z)}=\frac{1}{|T_{q}^{r}|}\sum_{x\in T_{q}^{r}}\mathcal{F}[z](x)\xrightarrow[q\to\infty]{}\int_{T^{r}}\mathcal{F}[z]d\mu=\tau(z)
\]
where $d\mu$ is Lebesgue probability measure on $T^{r}$, and the
convergence is by the definition of the Riemann integral; the last
equality is by Fourier inversion. 
\end{proof}

\subsection{Proof for non-abelian limit groups}

In the following, $\F$ will always denote some (not always the same)
free group, and $\Gamma$ will be a fixed limit group.
\begin{lem}
\label{lem:simple}If $\Gamma$ is a non-abelian limit group, then
the reduced $C^{*}$ algebra of $\Gamma$ is simple (has no non-trivial
closed ideals) and has a unique tracial state.
\end{lem}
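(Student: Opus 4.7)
The plan is to deduce both $C^{*}$-simplicity of $C_r^*(\Gamma)$ and uniqueness of its tracial state from two structural properties of non-abelian limit groups: trivial amenable radical, and acylindrical hyperbolicity. Uniqueness of trace will follow from trivial amenable radical by the Breuillard-Kalantar-Kennedy-Ozawa criterion, and $C^{*}$-simplicity will follow from acylindrical hyperbolicity, with trivial finite radical (which is automatic since $\Gamma$ is torsion-free) via the Dahmani-Guirardel-Osin theorem.

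The first step is to check that the amenable radical $R$ of $\Gamma$ is trivial. Limit groups are CSA (centralizers of non-trivial elements are abelian, and maximal abelian subgroups are malnormal), and they are linear, so by the Tits alternative combined with the fact that solvable subgroups of a limit group are abelian, every amenable subgroup of $\Gamma$ is itself abelian; in particular $R$ is abelian. But in a non-abelian CSA group no non-trivial abelian normal subgroup can exist: if $N$ were one, pick $a \in N \setminus \{e\}$ and let $M$ be the unique maximal abelian subgroup containing $a$; then for any $g \in \Gamma$ the element $gag^{-1}$ lies in $N$ and commutes with $a$, hence lies in $M$, forcing $gMg^{-1} = M$ and hence $g \in N_\Gamma(M) = M$ by malnormality. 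Taking $g$ arbitrary yields $\Gamma = M$, contradicting non-abelianness.

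The second step is acylindrical hyperbolicity of non-abelian limit groups. By work of Dahmani and Alibegović, any non-abelian limit group is hyperbolic relative to the collection of its maximal non-cyclic abelian subgroups, and any non-elementary relatively hyperbolic group is acylindrically hyperbolic by a theorem of Osin. Combining this with Step 1 and the two cited $C^{*}$-theoretic theorems then yields the lemma.

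The main obstacle I anticipate is not any serious technical difficulty but rather properly sourcing and stating the ingredients: the CSA and linearity properties of limit groups, Dahmani and Alibegović's relative hyperbolicity theorem, the DGO simplicity result for acylindrically hyperbolic groups, and the BKKO unique-trace criterion. A more self-contained alternative would be to verify the Powers averaging property directly by exhibiting, for each finite set of non-trivial elements, enough independent conjugators using the embedding of $\Gamma$ into an iterated extension of centralizers of a free group and the normal-form analysis from Section 3; this route would however be considerably more involved.
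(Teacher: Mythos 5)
Your proposal is correct, but it takes a much heavier route than the paper. The paper's proof is a short, self-contained verification of the Bekka--Cowling--de la Harpe property $P_{\mathrm{nai}}$ directly from the fully-residually-free definition: in an FRF group two elements freely generate a rank-2 free subgroup if and only if they do not commute, and given a finite set $S\subset\Gamma\setminus\{e\}$ one produces a single infinite-order $y$ commuting with no element of $S$ by passing to a free quotient injective on $S\cup\{e\}\cup\{[a,b]\}$ and pulling back a suitable element. Both simplicity and uniqueness of the trace then follow from \cite[Lemmas 2.1 and 2.2]{BCdlH}. Your route instead invokes relative hyperbolicity of limit groups (Dahmani, Alibegovi\'c), Osin's theorem that non-elementary relatively hyperbolic groups are acylindrically hyperbolic, the Dahmani--Guirardel--Osin simplicity theorem, and the Breuillard--Kalantar--Kennedy--Ozawa unique-trace criterion; all of these steps are valid (and your CSA argument that a non-abelian limit group has no non-trivial abelian normal subgroup is correct), so the proof goes through. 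Two remarks. First, the DGO theorem you cite is itself proved by establishing $P_{\mathrm{nai}}$, so the two approaches converge at the same $C^{*}$-algebraic criterion; the difference is only in how the group-theoretic hypothesis is verified, and for limit groups the direct FRF verification is a few lines whereas your route imports substantial machinery (it also makes the BKKO step redundant, since DGO already yields unique trace). Second, the ``more self-contained alternative'' you sketch at the end (Powers averaging via iterated extensions of centralizers and normal forms) is far more involved than necessary: the genuinely elementary alternative is the direct $P_{\mathrm{nai}}$ argument above. Similarly, for the triviality of the amenable radical you do not need linearity or the Tits alternative: an amenable subgroup contains no free subgroup of rank 2, and in an FRF group any subgroup without a rank-2 free subgroup is abelian, since any two non-commuting elements generate one.
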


\begin{proof}
We claim that any non-abelian FRF group $\Gamma$ has the $P_{\mathrm{nai}}$
property of Bekka, Cowling, and de la Harpe \cite[Def.  4]{BCdlH}.
This states that for any finite set $S\subset\Gamma\backslash\{e\}$,
there is $y\in\Gamma$ of infinite order such that for every $x\in S$,
$x$ and $y$ are free generators of a free rank 2 subgroup of $\Gamma$.

\emph{Proof of Claim.} It is easy to check that since $\Gamma$ is
FRF, two elements $x$ and $y$ are free generators of a free rank
$2$ subgroup of $\Gamma$ if and only if they do not commute. So
to check property $P_{\mathrm{nai}}$ above, it remains to check that
given any finite subset $S\subset\Gamma\backslash\{e\}$, there is
an infinite order $y$ not commuting with any element of $S$.

Because $\Gamma$ is non-abelian, there are two elements $a,b\in\Gamma$
with $[a,b]\neq e$. By the FRF condition, there is a epimorphism
$\phi:\Gamma\to\F$ that is an injection on $S\cup\{e\}\cup\{[a,b]\}$.
In particular, the rank of $\F$ must be at least 2. Since $\phi(S)$
is a finite subset of $\F$ not containing the identity, there is
an (necessarily infinite order) element $f$ not commuting with any
element of $\phi(S)$. Then any preimage of $f,$ say $y$, is infinite
order and does not commute with any element of $S$. \emph{This ends
the proof of the claim.}

The proof of Lemma \ref{lem:simple} now concludes by using \cite[Lemmas 2.1 and 2.2]{BCdlH}.
\end{proof}
\begin{proof}[Proof of Theorem \ref{thm:main-analytical-theorem}]
The upshot of Lemma \ref{lem:simple} is that proving the existence
of a sequence of unitary representations $\{\rho_{i}:\Gamma\to\U(N_{i})\}_{i=1}^{\infty}$
strongly converging to the regular representation reduces to proving
the existence of a sequence with
\begin{equation}
\limsup_{i\to\infty}\|\rho_{i}(z)\|\leq\|\lambda_{\Gamma}(z)\|\label{eq:limsup-aim}
\end{equation}
 for all $z\in\C[\Gamma]$ of unit $\ell^{1}$ norm. We give a proof
of this passage that was also mentioned in the Introduction.

Suppose (\ref{eq:limsup-aim}) holds. Then for any non-principal ultrafilter
$\mathcal{F}$, we form the ultraproduct\footnote{For background on ultrafilters and ultraproducts, see \cite[Appendix A]{BrownOzawa}.}
$C^{*}$-algebra $\mathcal{U}\eqdf\prod_{\mathcal{F}}\rho_{i}(\C[\Gamma])$.
There is a natural $*$-algebra map $\iota:\C[\Gamma]\to\mathcal{U}$.
The inequality (\ref{eq:limsup-aim}) implies 
\begin{equation}
\|\iota(z)\|_{\mathcal{U}}\leq\|\lambda_{\Gamma}(z)\|\label{eq:contraction-temp}
\end{equation}
for all $z\in\C[\Gamma]$. If $\mathcal{U}_{1}$ denotes the closure
of $\iota(\C[\Gamma])$ in $\mathcal{U}$, then inequality (\ref{eq:contraction-temp})
implies that the map $\iota$ extends continuously to $C^{*}$-algebra
map from $C_{r}^{*}(\Gamma)$ to $\mathcal{U}_{1}$. But since we
know $C_{r}^{*}(\Gamma)$ is simple by Lemma \ref{lem:simple}, this
map must be injective. But injective $C^{*}$-algebra maps are isometries
(to their images), so we have for all $z\in\C[\Gamma]$
\[
\|\lambda_{\Gamma}(z)\|=\|\iota(z)\|_{\mathcal{U}}=\lim_{i\to\mathcal{U}}\|\rho_{i}(z)\|.
\]
Since this holds for arbitrary non-principal ultrafilters, it holds
also that $\|\lambda_{\Gamma}(z)\|=\lim_{i\to\infty}\|\rho_{i}(z)\|.$

This reduces our task to proving (\ref{eq:limsup-aim}), which we
begin now. Given $\epsilon>0$ we will prove that there is a unitary
representation $\rho=\rho(U,\epsilon):\Gamma\to\U(N)$ with $N=N(U,\epsilon)$
such that 
\[
\|\rho(z)\|\leq\|\lambda_{\Gamma}(z)\|+\epsilon
\]
for $z\in\C[\Gamma]$ with support in $B\left(\frac{1}{\epsilon}\right)$
and $\|z\|_{\ell^{1}}=1$. By taking $\epsilon\to0$, this will imply
the existence of a sequence $\rho_{i}$ satisfying (\ref{eq:limsup-aim})
for any $z$.

As in the proof of Theorem \ref{thm:Limit-groups-are-C-free} $(\text{\S}$\ref{sec:Proof-of-Theorem-limit-group}),
by taking an $\frac{\epsilon}{3}$-net of the unit $\ell^{1}$ sphere
of the elements in $\C[\Gamma]$ supported on $B\left(\frac{1}{\epsilon}\right)$,
it suffices to prove
\[
\|\rho(a_{i})\|\leq\|\lambda_{\Gamma}(a_{i})\|+\frac{\epsilon}{3}
\]
for a finite collection $\{a_{i}\}_{i\in\mathcal{I}}$ of elements
of $\C[\Gamma]$ with $\|a_{i}\|_{\ell^{1}}=1$.

We apply Theorem \ref{thm:Limit-groups-are-C-free} with $S=B\left(\frac{1}{\epsilon}\right)$
to obtain a homomorphism $\phi:\Gamma\to\F$ with $\F$ free such
that 
\begin{equation}
\|\lambda_{\F}(\phi(a_{i}))\|\leq\|\lambda_{\Gamma}(a_{i})\|+\frac{\epsilon}{6}.\label{eq:free-approximation}
\end{equation}
for all $i\in\I$. Let $b_{i}\eqdf\phi(a_{i})\in\C[\F]$.

The remainder of the proof splits into three cases.

\textbf{A. }If $\F$ is rank 1, i.e. $\F=\Z$ then Lemma \ref{lem:case-of-Z}
tells that there is a finite dimensional unitary representation $\pi$
of $\F$ such that 
\begin{equation}
\|\pi(b_{i})\|\leq\|\lambda_{\F}(b_{i})\|+\frac{\epsilon}{6}\label{eq:free-group-inequality}
\end{equation}
for all $i\in\I$.

\textbf{B. }Otherwise, if one only wants unitary representations in
Theorem \ref{thm:main-analytical-theorem}, then by Haagerup and Thorbjørnsen
\cite[Thm. B]{HaagerupThr} there is a finite dimensional unitary
representation $\pi$ of $\F$ such that (\ref{eq:free-group-inequality})
holds for all $i\in\I$. 

\textbf{C. }If one wants the full strength of Theorem \ref{thm:main-analytical-theorem}
and $\F$ has rank at least $2$, then unitary representations satisfying
(\ref{eq:free-group-inequality}) for all $i\in\I$ exist by the work
of Bordenave and Collins \cite{BordenaveCollins}.

Then let $\rho\eqdf\pi\circ\phi$, a finite dimensional unitary representation
of $\Gamma$. Since $\rho(a_{i})=\pi(b_{i})$, using (\ref{eq:free-approximation})
we obtain
\[
\|\rho(a_{i})\|=\|\pi(b_{i})\|\leq\|\lambda_{\F}(b_{i})\|+\frac{\epsilon}{6}\leq\|\lambda_{\Gamma}(a_{i})\|+\frac{\epsilon}{3}
\]
 for all $i\in\I$ as required.
\end{proof}

\section{Proof of Corollary \ref{cor:matrix-coefficients}\label{sec:Proof-of-matrix-coefficients-version}}

We wish to appeal to results in the literature to deduce Corollary
\ref{cor:matrix-coefficients} from Theorem \ref{thm:main-analytical-theorem}.
To do so, we first establish the following.
\begin{lem}[Strong convergence implies weak convergence]
\label{lem:strong-implies_weak}Let $\Gamma$ be a finitely generated
discrete group such that $C_{r}^{*}(\Gamma)$ has a unique tracial
state. If $\{\rho_{i}:\Gamma\to\U(N_{i})\}_{i=1}^{\infty}$ are a
sequence of finite dimensional unitary representations that strongly
converge to the regular representation of $\Gamma$, then for any
$z\in\C[\Gamma]$
\[
\lim_{i\to\infty}\frac{\Tr(\rho_{i}(z))}{N_{i}}=\tau(z),
\]
where $\tau$ is the unique tracial state on $C_{r}^{*}(\Gamma)$.
$\Tr$ denotes the usual matrix trace on $\U(N_{i})$ extended linearly
to $\C[\U(N_{i})]$.
\end{lem}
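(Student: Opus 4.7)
The plan is to view each normalized trace $\tau_i(z) \eqdf \tr(\rho_i(z))/N_i$ as a tracial state on the group algebra, show that any weak-$*$ subsequential limit extends continuously to a tracial state on $C_r^*(\Gamma)$, and then invoke uniqueness to identify the limit with $\tau$.

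First I would record the elementary matrix inequality $|\tr(M)/N| \leq \|M\|_{\mathrm{op}}$ for $M \in \mathrm{Mat}_N(\C)$, which follows from $|\tr(M)| = |\sum_i \langle Me_i, e_i\rangle| \leq N\|M\|_{\mathrm{op}}$. Applied to $M = \rho_i(z)$ this gives
\[
|\tau_i(z)| \leq \|\rho_i(z)\|_{\mathrm{op}} \qquad \forall z \in \C[\Gamma].
\]
Each $\tau_i$ is unital ($\tau_i(e)=1$), positive ($\tau_i(z^*z) = \|\rho_i(z)\|_{\HS}^2/N_i \geq 0$), and tracial on $\C[\Gamma]$ because the matrix trace is tracial.

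Next I would fix an arbitrary non-principal ultrafilter $\mathcal{F}$ on $\N$ and define $\tau_\mathcal{F}(z) \eqdf \lim_{i \to \mathcal{F}} \tau_i(z)$. The ultralimit exists since by strong convergence $\|\rho_i(z)\|\to\|\lambda_\Gamma(z)\|$, so $\{\tau_i(z)\}_i$ is bounded for each fixed $z \in \C[\Gamma]$. Taking ultralimits preserves all the state properties, so $\tau_\mathcal{F}$ is a tracial state on $\C[\Gamma]$ satisfying $|\tau_\mathcal{F}(z)| \leq \|\lambda_\Gamma(z)\|$. Therefore $\tau_\mathcal{F}$ extends by continuity to a tracial state on $C_r^*(\Gamma)$ (the trace identity passes to the closure by continuity in operator norm on each of the two variables).

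By the hypothesis that $C_r^*(\Gamma)$ has a unique tracial state, $\tau_\mathcal{F} = \tau$. Since this holds for every non-principal ultrafilter $\mathcal{F}$, every subsequential limit of $\{\tau_i(z)\}$ equals $\tau(z)$, giving $\lim_{i\to\infty}\tau_i(z) = \tau(z)$ for each $z \in \C[\Gamma]$. I do not expect any serious obstacle here: the only subtle point is ensuring the limiting state is defined on and tracial on all of $C_r^*(\Gamma)$ (not just $\C[\Gamma]$), which is handled by the bound $|\tau_\mathcal{F}(z)|\leq\|\lambda_\Gamma(z)\|$ together with density of $\C[\Gamma]$ in $C_r^*(\Gamma)$.
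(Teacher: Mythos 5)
Your proof is correct and follows essentially the same route as the paper: fix an arbitrary non-principal ultrafilter, take the ultralimit of the normalized traces, use strong convergence to see that the limiting functional is dominated by the reduced norm and hence extends to a tracial state on $C_{r}^{*}(\Gamma)$, and conclude by uniqueness. The only (cosmetic) difference is that the paper packages the argument through the ultraproduct $C^{*}$-algebra $\prod_{\mathcal{F}}\rho_{i}(\C[\Gamma])$ and the isometric embedding of $C_{r}^{*}(\Gamma)$ into it, whereas you work directly with scalar ultralimits of the states $\tau_{i}$; both hinge on the same two inputs.
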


We heard this lemma stated by Beno\^{i}t Collins in a talk in Northwestern
University in June 2022. The proof is to our knowledge not in the
literature so we give it here.
\begin{proof}[Proof of Lemma \ref{lem:strong-implies_weak}]
Consider any non-principal ultrafilter $\mathcal{F}$ on $\N$, and
form the ultraproduct $C^{*}$-algebra $\mathcal{U}\eqdf\prod_{\mathcal{F}}\rho_{i}(\C[\Gamma])$.
Let $\mathcal{U}_{1}$ denote the $C^{*}$-subalgebra in $\mathcal{U}$
generated by the images $\hat{\gamma_{i}}$ in $\mathcal{U}$ of the
generators $\gamma_{i}$ of $\Gamma$. Strong convergence implies
that the natural map from $\C[\Gamma]$ to $\mathcal{U}_{1}$ is an
isometric embedding with respect to the norm on $\C[\Gamma]$ coming
from $C_{r}^{*}(\Gamma)$, and hence extends to an isomorphism between
$C_{r}^{*}(\Gamma)$ and $\mathcal{U}_{1}$. On the other hand, 
\[
\lim_{i\to\mathcal{F}}\frac{\Tr\circ\rho_{i}}{N_{i}}
\]
defines a tracial state on $\mathcal{U}_{1}$, and when transferred
to $C_{r}^{*}(\Gamma)$ must coincide with the unique tracial state
there. Since the convergence holds for all non-principal ultrafilters,
the convergence must hold in general.
\end{proof}
\begin{proof}[Proof of Corollary \ref{cor:matrix-coefficients}]
If $\Gamma$ is abelian, let $\rho_{i}$ denote the series of representations
provided by Lemma \ref{lem:case-of-Z}. These strongly converge to
the regular representation and 
\begin{equation}
\lim_{i\to\infty}\frac{\Tr(\rho_{i}(z))}{N_{i}}=\tau(z)\label{eq:weakconvergence-1}
\end{equation}
 where $\tau(z)$ is the canonical faithful tracial state on $C_{r}^{*}(\Gamma)$
(defined by $\tau(e)=1$ and $\tau(g)=0$ for $e\neq g\in\Gamma$). 

If $\Gamma$ is non-abelian, let $\rho_{i}$ denote the strongly convergent
representations provided by Theorem \ref{thm:main-analytical-theorem}.
Since non-abelian limit groups $\Gamma$ have unique tracial states
on $C_{r}^{*}(\Gamma)$ by Lemma \ref{lem:simple}, Lemma \ref{lem:strong-implies_weak}
implies that we have (\ref{eq:weakconvergence-1}) also in this case.

Note that for unitary matrices, $u_{1},\ldots,u_{r}$, any non-commutative
polynomial (possibly with matrix coefficients) in the $u_{i}$ is
another polynomial (possibly with matrix coefficients) in the Hermitian
matrices $u_{i}+u_{i}^{*}$ and $i(u_{i}-u_{i}^{*})$, and vice versa.

Given this observation, (\ref{eq:weakconvergence-1}) together with
strong convergence of the sequence $\rho_{i}$ and faithfulness of
the trace $\tau$ used as inputs to \cite[Prop. 7.3]{Male} yield
Corollary \ref{cor:matrix-coefficients}.
\end{proof}

\section{Proof of Proposition \ref{prop:Klein-bottle}}

If $\Gamma\leq\Lambda$ are countable groups and $\rho:\Gamma\to\U(H)$
is a unitary representation of $\Gamma$ on a separable Hilbert space
$H$, the induced representation
\[
\Ind_{\Gamma}^{\Lambda}\rho\eqdf\C[\Lambda]\otimes_{\Gamma}H
\]
 has an invariant hermitian inner product for which $g_{i}\otimes_{\Gamma}e_{j}$
is an orthonormal basis, where $g_{1},\ldots,g_{K},\ldots$ denote
left coset representatives for $\Gamma$ in $\Lambda$ and $\{e_{j}\}_{j=1}^{\dim(H)}$
are an orthonormal basis for $H$.

The proof of both cases of Proposition \ref{prop:Klein-bottle} rely
on the following lemma, which may be of independent interest. 
\begin{lem}
\label{lem:induction}Let $\Lambda$ be any discrete group, $\Gamma$
a finite index subgroup, and $\rho_{i}:\Gamma\to\U(N_{i})$ finite
dimensional unitary representations for which the conclusion of Corollary
\ref{cor:matrix-coefficients} holds. Then the induced unitary representations
$\Ind_{\Gamma}^{\Lambda}\rho_{i}$ strongly converge to the regular
representation of $\Lambda$.
\end{lem}

\begin{proof}
Let $\rho:\Gamma\to\U(H)$ be any unitary representation of $\Gamma$
on a Hilbert space $H$. Let $g_{1},\ldots,g_{K}$ denote left coset
representatives for $\Gamma$ in $\Lambda$. For each $1\leq k\leq K$
and $h\in\Lambda$ we have 
\[
h[g_{k}\otimes_{\Gamma}v]=g_{\kappa(k,h)}\gamma(k,h)\otimes_{\Gamma}e_{\ell}=g_{\kappa(k,h)}\otimes_{\Gamma}\rho(\gamma(k,h))v
\]
This map is isometrically conjugate to the map 
\[
\sum_{k}E_{k,\kappa(k,h)}\otimes_{\C}\rho(\gamma(k,h))\in\End(\ell^{2}(\Lambda/\Gamma))\otimes_{\C}\End(H).
\]
where $E_{p,q}(g_{i})\eqdf\delta_{ip}g_{q}$ are the elementary matrices.
The isometric conjugacy does not depend on $\rho$ or $h$, only $\Lambda$
and $\Gamma$ and the choice of coset representatives. This means
for any $z\in\C[\Gamma]$ the map $[\Ind_{\Gamma}^{\Lambda}\rho](z)$
is conjugate to some 
\begin{equation}
\sum_{g\in\Lambda}a(g)\otimes\rho(g)\in\End(\ell^{2}(\Lambda/\Gamma))\otimes_{\C}\End(H)\label{eq:tensor-form-induced}
\end{equation}
where $g\mapsto a(g)$ is finitely supported and the coefficients
$a(g)$ do not depend on $\rho$; i.e. as $\rho$ varies they may
be taken the same for a fixed $z$.

Applying the conclusion of of Corollary \ref{cor:matrix-coefficients}
to (\ref{eq:tensor-form-induced}) for a sequence of $\rho_{i}:\Gamma\to\U(N_{i})$
we learn that 
\[
\lim_{i\to\infty}\|[\Ind_{\Gamma}^{\Lambda}\rho_{i}](z)\|=\|\sum_{g\in\Lambda}a(g)\otimes\lambda_{\Gamma}(g)\|.
\]
 But now applying our previous argument in reverse, $\sum_{g\in\Lambda}a(g)\otimes\lambda_{\Gamma}(g)$
is isometrically conjugate to $[\Ind_{\Gamma}^{\Lambda}\lambda_{\Gamma}](z)=\lambda_{\Lambda}(z)$,
since $\C[\Lambda]\otimes_{\Gamma}\ell^{2}[\Gamma]\cong\ell^{2}(\Lambda)$
as a left $\Lambda$ module. Hence the conclusion
\[
\lim_{i\to\infty}\|[\Ind_{\Gamma}^{\Lambda}\rho_{i}](z)\|=\|\lambda_{\Lambda}(z)\|.
\]
\end{proof}
\begin{proof}[Proof of Proposition \ref{prop:Klein-bottle}]
The Klein bottle has an orientable double cover with Euler characteristic
$0=2\times0$, hence a torus. The non-orientable surface $(\R P^{2})^{\#3}$
with Euler characteristic $-1$ has a orientable double cover with
Euler characteristic $-2$, hence it is a genus $2$ orientable surface.
This means that their fundamental groups have index 2 subgroups respectively
isomorphic to $\Z^{2}$ and $\Gamma_{2}$, the fundamental group of
a orientable genus 2 surface. Since $\Z^{2}$ and $\Gamma_{2}$ are
both limit groups, Corollary \ref{cor:matrix-coefficients} applies
to both of them. Therefore Lemma \ref{lem:induction} implies that
the fundamental groups of both the Klein bottle and $(\R P^{2})^{\#3}$
have finite dimensional unitary representations that strongly converge
to their regular representations.
\end{proof}

\appendix

\section{\label{sec:Spectral-gaps-of}Spectral gaps of hyperbolic surfaces}

The purpose of this appendix is to explain how the following theorem
can be deduced from Corollary \ref{cor:matrix-coefficients}.
\begin{thm}
\label{thm:Main Theorem}Let $X$ be a compact hyperbolic surface.
There exists a sequence of Riemannian covers $\left\{ X_{i}\right\} _{i\in\mathbb{N}}$
of $X$ with genera $g(i)\to\infty$ as $i\to\infty$ such that for
any $\epsilon>0$, for $i$ large enough depending on $\epsilon$,
\[
\spec\left(\Delta_{X_{i}}\right)\cap\left[0,\frac{1}{4}-\epsilon\right)=\spec\left(\Delta_{X}\right)\cap\left[0,\frac{1}{4}-\epsilon\right),
\]
where the multiplicities are the same on either side. 
\end{thm}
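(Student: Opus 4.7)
The plan is first to set $\Gamma = \pi_1(X)$, which by Corollary~\ref{cor:surfaces} is a non-abelian limit group, and apply Corollary~\ref{cor:matrix-coefficients} to obtain homomorphisms $\Gamma \to S_{N_i}$ whose standard parts $\rho_i: \Gamma \to \U(N_i-1)$ enjoy matrix-coefficient convergence to the regular representation $\lambda_\Gamma$. Each permutation action yields a finite Riemannian cover $X_i \to X$ (after a standard reduction one may assume the action is transitive so that $X_i$ is connected, with $g(X_i) \to \infty$). The orthogonal, Laplacian-equivariant decomposition
\[
L^2(X_i) \cong L^2(X) \oplus L^2(X;\rho_i),
\]
where $L^2(X;\rho_i)$ is the space of $L^2$-sections of the flat unitary bundle over $X$ with monodromy $\rho_i$, reduces Theorem~\ref{thm:Main Theorem} to proving that for every $\epsilon>0$ and all sufficiently large $i$, the twisted Laplacian $\Delta_{X,\rho_i}$ has no spectrum below $\tfrac{1}{4}-\epsilon$.

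To establish this spectral gap I would use the resolvent method. For $s \in \C$ with $\Re(s)\in(\tfrac12, 1]$, an $L^2$-eigenvalue $\lambda = s(1-s) < \tfrac14$ corresponds to a pole of $R_{\rho_i}(s) \eqdf (\Delta_{X,\rho_i} - s(1-s))^{-1}$. Realising $L^2(X;\rho_i)$ as $\rho_i$-equivariant $\C^{N_i-1}$-valued $L^2$ functions on $\hh^2$, the Schwartz kernel of $R_{\rho_i}(s)$ has the form
\[
K_{\rho_i,s}(z,w) = \sum_{\gamma\in\Gamma} \rho_i(\gamma)\, G_s(z,\gamma w),
\]
with $G_s$ the resolvent kernel of $\Delta_{\hh^2}$. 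Ruling out eigenvalues below $\tfrac14-\epsilon$ therefore follows from a uniform bound $\|R_{\rho_i}(s)\|\le C(\epsilon)$, valid for all $s$ on the line $\Re(s) = \tfrac12+\sqrt{\epsilon}$ and all $i$ large.

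To obtain such a bound, I would fix a compact fundamental domain $\mathcal{F}\subset\hh^2$ for $\Gamma$ and exploit the exponential decay of $G_s(z,w)$ in $d(z,w)$ available for $\Re(s) > \tfrac12$: truncating the sum to $\gamma$ with $d(\mathcal{F},\gamma\mathcal{F}) \le T$ leaves a tail of norm $O(e^{-\sqrt{\epsilon}\, T})$, uniformly in $i$. After projecting $L^2(\mathcal{F})$ onto the span of a suitable orthonormal basis of dimension $r$ (absorbing the logarithmic diagonal singularity of $G_s$ into a fixed compact remainder via a parametrix), the truncated operator becomes $\sum_\gamma a(\gamma)\otimes\rho_i(\gamma)$ for some finitely supported $a:\Gamma\to\mathrm{Mat}_{r\times r}(\C)$. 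Corollary~\ref{cor:matrix-coefficients} then bounds its norm in the limit $i\to\infty$ by $\|\sum_\gamma a(\gamma)\otimes\lambda_\Gamma(\gamma)\|$. Reversing the truncation and discretization on this side identifies the limiting operator with the analogous resolvent for $\Delta_{\hh^2}$, whose norm equals $\mathrm{dist}\bigl(s(1-s),[\tfrac14,\infty)\bigr)^{-1}$, bounded uniformly for $\Re(s)\ge\tfrac12+\sqrt{\epsilon}$.

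The main obstacle, and the technical heart of the argument, is maintaining uniform control both in $i$ and in $\Re(s)\downarrow\tfrac12$ while performing the two approximations required to fit Corollary~\ref{cor:matrix-coefficients}: the spatial truncation in $\gamma$ (which needs quantitative $s$-dependent exponential decay estimates for $G_s$) and the finite-rank projection on $L^2(\mathcal{F})$ (which needs careful handling of the diagonal singularity). Once these are dispatched, the reduction to the matrix-coefficient bound of Corollary~\ref{cor:matrix-coefficients} and the regular-representation resolvent computation on $\hh^2$ are comparatively routine, and together yield the required uniform resolvent bound, hence the spectral gap and Theorem~\ref{thm:Main Theorem}.
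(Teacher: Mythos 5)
Your global architecture matches the paper's appendix: decompose $L^2(X_i)$ into old and new parts, reduce to a spectral gap for the twisted Laplacian on the new part, and transfer an operator-norm bound from the regular representation to the $\rho_i$ via Corollary \ref{cor:matrix-coefficients} (extended to compact operator coefficients). But the analytic core of your argument has a genuine gap. You propose to realise the resolvent on the new part by automorphizing the free resolvent kernel, $K_{\rho_i,s}(z,w)=\sum_\gamma \rho_i(\gamma)G_s(z,\gamma w)$, and you claim that restricting to $\gamma$ with $d(\mathcal{F},\gamma\mathcal{F})\le T$ leaves a tail of norm $O(e^{-\sqrt{\epsilon}\,T})$ uniformly in $i$. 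This is false for small $\epsilon$: since $\Gamma$ is a cocompact lattice, $\#\{\gamma: d(\mathcal{F},\gamma\mathcal{F})\in[T,T+1]\}\asymp e^{T}$, while $G_s$ decays only like $e^{-(\frac12+\sqrt{\epsilon})d(z,w)}$ for $\Re(s)=\frac12+\sqrt{\epsilon}$. The tail is therefore of size $\sum_{T'\ge T}e^{T'}e^{-(\frac12+\sqrt{\epsilon})T'}$, which diverges whenever $\sqrt{\epsilon}<\frac12$; the automorphized resolvent sum converges absolutely only for $\Re(s)>1$. So there is no uniform truncation on which to invoke Corollary \ref{cor:matrix-coefficients}, and the step ``reversing the truncation'' on the $\lambda_\Gamma$ side fails for the same reason.

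The paper's proof (following \cite{HM21}) circumvents exactly this obstruction by using a parametrix rather than the resolvent itself: the radial kernel $R_{\mathbb{H}}(s;r)$ is cut off at radius $T$ \emph{on $\mathbb{H}$, before automorphizing}, so the resulting group sum is finite by construction, at the cost that the truncated operator only satisfies $(\Delta-s(1-s))R^{(T)}_{\mathbb{H},i}(s)=1+\mathbb{L}^{(T)}_{\mathbb{H},i}(s)$. The error kernel is supported where $r\in[T,T+1]$, and its norm on $L^2(\mathbb{H})$ --- a single operator on the infinite cover, bounded by \cite[Lemma 5.2]{HM21} with no sum over $\Gamma$ required --- can be made at most $\frac14$. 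Strong convergence with compact (Hilbert--Schmidt) coefficients transfers this bound to the covers for large $i$, a finite net handles uniformity in $s\in[s_0,1]$, and invertibility of $1+\mathbb{L}^{(T)}_{\mathbb{H},i}(s)$ yields a bounded right inverse of $\Delta-s(1-s)$ on $L^2_{\new}(X_i)$. To repair your argument you must replace ``bound the resolvent by truncating its automorphic expansion'' with such a parametrix identity. (A smaller imprecision: ruling out new eigenvalues in $[0,\frac14-\epsilon)$ requires invertibility at real $s\in(\frac12+\sqrt{\epsilon},1]$, not a resolvent bound on the vertical line $\Re(s)=\frac12+\sqrt{\epsilon}$.)
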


To see that we can take all surfaces to be arithmetic we use the following
argument. Let 
\[
\Gamma_{0}(15)\eqdf\left\{ \left(\begin{array}{cc}
a & b\\
c & d
\end{array}\right)\in\SL_{2}(\Z)\,:\,c\equiv0\bmod15\right\} .
\]
The cusped hyperbolic surface $Y_{0}(15)\eqdf\Gamma_{0}(15)\backslash\mathbb{H}$
has no spectrum in $(0,\frac{1}{4})$ by a result of Huxley \cite[Thm., pg. 250]{Huxley}.
Let $D_{3,5}$ denote the quaternion algebra over $\mathbf{Q}$ generated
by $i,j,k$ such that 
\[
i^{2}=3,\,j^{2}=5,\,ij=-ji=k.
\]
Then $D_{3,5}$ is a division algebra with discriminant 15 \cite[Ex. 8.27]{BergeronBook}.
Let $\mathcal{O}$ denote a maximal order\footnote{See \cite[Ex. 8.27]{BergeronBook} for an explicit maximal order.}
in $D_{3,5}$ and $\mathcal{O}^{1}$ the elements of norm 1 in $\text{\ensuremath{\mathcal{O}}}$.
Then $\text{\ensuremath{\mathcal{O}^{1}}}$ embeds as a cocompact
subgroup of $\mathrm{PSL}_{2}(\mathbf{R})$; let $X=\ensuremath{\mathcal{O}^{1}}\backslash\mathbb{H}$.
By the work of Jacquet and Langlands \cite{JacquetLanglands} (see
\cite[Thm. 8.18]{BergeronBook} for a convenient concise reference)
every eigenvalue of $X$ is an eigenvalue of $Y_{0}(15)$ and hence
$X$ has no eigenvalues in $(0,\frac{1}{4})$. 

Taking this $X$ in Theorem \ref{thm:Main Theorem}, one obtains a
different proof of \cite[Corollary 1.3]{HM21} with a slightly stronger
conclusion, i.e. there exists a sequence of compact \emph{arithmetic
}hyperbolic surfaces $\left\{ X_{i}\right\} _{i\in\mathbb{N}}$ with
genera $g\left(X_{i}\right)\to\infty$ and $\lambda_{1}\left(X_{i}\right)\to\frac{1}{4}$.
Such a sequence of covering surfaces also have systoles uniformly
bounded away from $0$, also in contrast to the proof of \cite[Corollary 1.3]{HM21}
(this conclusion on the systole is independent of arithmeticity).

\subsection{Set up}

For any $n\in\mathbb{N}$, let $[n]\eqdf\{1,\ldots,n\}$ and $S_{n}$
denote the group of permutations of $[n]$. Let $X$ be a fixed compact
hyperbolic surface with genus $g\geqslant2$. We view $X$ as 
\[
X=\Gamma\backslash\mathbb{H},
\]
where $\Gamma$ is a discrete, torsion free subgroup of $\text{PSL}_{2}\left(\mathbb{R}\right)$,
isomorphic to the surface group $\Lambda_{g}$. Given any $\phi\in\text{Hom}\left(\Gamma,S_{n}\right)$
we define an action of $\Gamma$ on $\mathbb{H}\times[n]$ by 
\[
\gamma\left(z,x\right)\eqdf\left(\gamma z,\phi(\gamma)[x]\right).
\]
Then we obtain a degree $n$ covering space $X_{\phi}$ of $X$ by
\begin{equation}
X_{\phi}\eqdf\Gamma\backslash_{\phi}\left(\mathbb{H}\times[n]\right).\label{eq:covering surface def}
\end{equation}
Let $V_{n}\eqdf\ell^{2}\left([n]\right)$ and $V_{n}^{0}\subset V_{n}$
the subspace of functions with zero mean. Then $S_{n}$ acts on $V_{n}$
via $\text{std}$, the standard representation by $0$-$1$ matrices,
and $V_{n}^{0}$ is the $n-1$ dimensional irreducible component.
Throughout this appendix, we let $\left\{ \rho_{i}\right\} _{i\in\mathbb{N}}$
be a sequence of $N_{i}$-dimensional unitary representations of $\Gamma$
that factor through $S_{N_{i}}$ by
\begin{equation}
\Gamma\xrightarrow{\phi_{i}}S_{N_{i}}\xrightarrow{\text{std}}\text{End}\left(V_{N_{i}}^{0}\right),\label{eq:Factor-through-Sn}
\end{equation}
such that for any $r\in\N$ and finitely supported map $a:\Gamma\to\mathrm{Mat}_{r\times r}(\C)$,
we have 
\begin{equation}
\limsup_{i\to\infty}\|\sum_{\gamma\in\Gamma}a(\gamma)\otimes\rho_{i}(\gamma)\|\leq\|\sum_{\gamma\in\Gamma}a(\gamma)\otimes\lambda(\gamma)\|,\label{eq:Appendix-limsup-bound}
\end{equation}
as provided by Corollary \ref{cor:matrix-coefficients}. Note that
by approximation by finite-rank operators on either side (as in \cite[Proof of Prop. 6.3]{HM21})
the property in (\ref{eq:Appendix-limsup-bound}) extends easily to
the case of 
\[
a:\Gamma\to\mathcal{K}
\]
where $\K$ are the compact operators on a separable Hilbert space.
We use this extension in the sequel.

Then through $\left\{ \rho_{i}\right\} _{i\in\mathbb{N}}$, we obtain
a sequence of degree-$N_{i}$ covering surfaces $\left\{ X_{i}\right\} _{i\in\mathbb{N}}$
from (\ref{eq:covering surface def}).

\subsection{Function spaces}

For the convenience of the reader we recall the following function
spaces from \cite[Section 2.2]{HM21}. We define $L_{\new}^{2}\left(X_{i}\right)$
to be the space of $L^{2}$ functions on $X_{i}$ orthogonal to all
lifts of $L^{2}$ functions from $X$. Then
\[
L^{2}\left(X_{i}\right)\cong L_{\text{\ensuremath{\new}}}^{2}\left(X\right)\oplus L^{2}\left(X\right).
\]
We fix $F$ to be a Dirichlet fundamental domain for $X$. Let $C^{\infty}\left(\mathbb{H};V_{N_{i}}^{0}\right)$
denote the smooth $V_{N_{i}}^{0}$-valued functions on $\mathbb{H}$.
There is an isometric linear isomorphism between 
\[
C^{\infty}\left(X_{i}\right)\cap L_{\text{\ensuremath{\new}}}^{2}\left(X_{i}\right),
\]
and the space of smooth $V_{N_{i}}^{0}$-valued functions on $\mathbb{H}$
satisfying 
\[
f\left(\gamma z\right)=\rho_{i}\left(\gamma\right)f\left(z\right),
\]
 for all $\gamma\in\Gamma$, with finite norm
\[
\|f\|_{L^{2}(F)}^{2}\eqdf\int_{F}\|f(z)\|_{V_{N_{i}}^{0}}^{2}d\mu_{\mathbb{H}}\left(z\right)<\infty.
\]
We denote the space of such functions by $C_{\phi_{i}}^{\infty}\left(\mathbb{H};V_{N_{i}}^{0}\right).$
The completion of $C_{\phi_{i}}^{\infty}\left(\H;V_{N_{i}}^{0}\right)$
with respect to $\|\bullet\|_{L^{2}(F)}$ is denoted by $L_{\phi_{i}}^{2}\left(\H;V_{N_{i}}^{0}\right)$;
the isomorphism above extends to one between $L_{\text{\ensuremath{\new}}}^{2}\left(X_{i}\right)$
and $L_{\phi_{i}}^{2}\left(\mathbb{H};V_{N_{i}}^{0}\right)$. 

We introduce the following Sobolev spaces. Let $H^{2}\left(\H\right)$
denote the completion of $C_{c}^{\infty}\left(\H\right)$ with respect
to the norm
\[
\|f\|_{H^{2}\left(\H\right)}^{2}\eqdf\|f\|_{L^{2}\left(\H\right)}^{2}+\|\Delta f\|_{L^{2}\left(\H\right)}^{2}.
\]
Let $C_{c,\phi_{i}}^{\infty}\left(\H;V_{N_{i}}^{0}\right)$ denote
the subset of $C_{\phi_{i}}^{\infty}\left(\H;V_{N_{i}}^{0}\right)$
consisting of functions which are compactly supported modulo $\Gamma$.
We let $H_{\phi_{i}}^{2}\left(\H;V_{N_{i}}^{0}\right)$ denote the
completion of $C_{c,\phi_{i}}^{\infty}\left(\H;V_{N_{i}}^{0}\right)$
with respect to the norm
\[
\|f\|_{H_{\phi_{i}}^{2}\left(\H;V_{N_{i}}^{0}\right)}^{2}\eqdf\|f\|_{L^{2}(F)}^{2}+\|\Delta f\|_{L^{2}(F)}^{2}.
\]
We let $H^{2}\left(X_{i}\right)$ denote the completion of $C_{c}^{\infty}\left(X_{i}\right)$
with respect to the norm 
\[
\|f\|_{H^{2}(X_{i})}^{2}\eqdf\|f\|_{L^{2}(X_{i})}^{2}+\|\Delta f\|_{L^{2}(X_{i})}^{2}.
\]
Viewing $H^{2}\left(X_{i}\right)$ as a subspace of $L^{2}\left(X_{\phi_{i}}\right)$,
we let 
\[
H_{\new}^{2}\left(X_{i}\right)\eqdf H^{2}\left(X_{i}\right)\cap L_{\new}^{2}\left(X_{i}\right).
\]
There is an isometric isomorphism between $H_{\new}^{2}\left(X_{i}\right)$
and $H_{\phi_{i}}^{2}\left(\H;V_{N_{i}}^{0}\right)$ that intertwines
the two relevant Laplacian operators.

\subsection{Operators on $\mathbb{H}$}

For $s\in\mathbb{C}$ with $\text{Re}(s)>\frac{1}{2}$, let
\begin{align*}
R_{\mathbb{H}}(s) & :L^{2}\left(\mathbb{H}\right)\to L^{2}\mathbb{\left(H\right)},\\
R_{\mathbb{H}}(s) & \eqdf\left(\Delta_{\mathbb{H}}-s(1-s)\right)^{-1},
\end{align*}
be the resolvent on the upper half plane. Then $R_{\mathbb{H}}(s)$
is an integral operator with radial kernel $R_{\mathbb{H}}(s;r)$.
Let $\chi_{0}:\mathbb{R}\to\left[0,1\right]$ be a smooth function
such that 
\[
\chi_{0}\left(t\right)=\begin{cases}
0 & \text{if \ensuremath{t\leqslant0}, }\\
1 & \text{if \ensuremath{t\geqslant1}. }
\end{cases}.
\]
For $T>0$, we define a smooth cutoff function $\chi_{T}$ by $\chi_{T}(t)\eqdf\chi_{0}(t-T).$
We then define the operator $R_{\mathbb{H}}^{(T)}(s):L^{2}\left(\mathbb{H}\right)\to L^{2}\left(\mathbb{H}\right)$
to be the integral operator with radial kernel
\[
R_{\mathbb{H}}^{(T)}(s;r)\eqdf\chi_{T}(r)R_{\mathbb{H}}(s;r).
\]
Following \cite[Section 5.2]{HM21} we define $L_{\mathbb{H}}^{(T)}(s):L^{2}\left(\mathbb{H}\right)\to L^{2}\left(\mathbb{H}\right)$
to be the integral operator with radial kernel
\[
\mathbb{L}^{(T)}(s;r)\eqdf\left(-\frac{\partial^{2}}{\partial r^{2}}\left[\chi_{T}\right]-\frac{1}{\tanh r}\frac{\partial}{\partial r}\left[\chi_{T}\right]\right)R_{\mathbb{H}}(s;r)-2\frac{\partial}{\partial r}\left[\chi_{T}\right]\frac{\partial R_{\mathbb{H}}}{\partial r}(s;r).
\]
It is proved in \cite[Lemma 5.3]{HM21} that for any $f\in C_{c}^{\infty}\left(\mathbb{H}\right)$and
$s\in\left[\frac{1}{2},1\right]$, we have 
\begin{enumerate}
\item $R_{\mathbb{H}}^{(T)}(s)f\in H^{2}\left(\mathbb{H}\right).$
\item $\left(\Delta-s(1-s)\right)R_{\mathbb{H}}^{(T)}(s)f=f+\mathbb{L}_{\mathbb{H}}^{(T)}(s)f$
as equivalence classes of $L^{2}$ functions.
\end{enumerate}
It is also proved, as a consequence of \cite[Lemma 5.2]{HM21}, that
for any $s_{0}>\frac{1}{2}$ we can choose a $T=T(s_{0})$ such that
for all $s\in[s_{0},1]$ we have 
\begin{equation}
\|\mathbb{L}_{\mathbb{H}}^{(T)}(s)\|_{L^{2}}\leq\frac{1}{4}.\label{eq:infinity operator bound}
\end{equation}

\subsection{Proof of Theorem \ref{thm:Main Theorem} }

Recall that $\left\{ \rho_{i}\right\} _{i\in\mathbb{N}}$ is a sequence
of strongly convergent representations of the form (\ref{eq:Factor-through-Sn})
that satisfy (\ref{eq:Appendix-limsup-bound}) as guaranteed by Corollary
\ref{cor:matrix-coefficients}. As in \cite[Section 5.3]{HM21}, we
define
\begin{align*}
R_{\mathbb{H},i}^{(T)}(s;x,y) & \eqdf R_{\mathbb{H}}^{(T)}(s;x,y)\mathrm{Id}_{V_{N_{i}}^{0}},\\
\mathbb{L}_{\mathbb{H},i}^{(T)}(s;x,y) & \eqdf\mathbb{L}_{\mathbb{H}}^{(T)}(s;x,y)\mathrm{Id}_{V_{N_{i}}^{0}}.
\end{align*}
We define $R_{\mathbb{H},i}^{(T)}\left(s\right)$, $\mathbb{L}_{\mathbb{H},i}^{(T)}\left(s\right)$
to be the corresponding integral operators. We have the following
analogue of \cite[Lemma 5.5]{HM21}.
\begin{lem}
\label{lem:bounded op}For all $s\in\left[\frac{1}{2},1\right]$, 
\begin{enumerate}
\item The integral operator $R_{\mathbb{H},i}^{(T)}\left(s\right)$ is well-defined
on $C_{c,\phi_{i}}^{\infty}\left(\mathbb{H};V_{N_{i}}^{0}\right)$
and extends to a bounded operator 
\[
R_{\mathbb{H},i}^{(T)}\left(s\right):L_{\phi_{i}}^{2}\left(\mathbb{H};V_{N_{i}}^{0}\right)\to H_{\phi_{i}}^{2}\left(\mathbb{H};V_{N_{i}}^{0}\right).
\]
\item The integral operator $\mathbb{L}_{\mathbb{H},i}^{(T)}\left(s\right)$
is well-defined on $C_{c,\phi_{i}}^{\infty}\left(\mathbb{H};V_{N_{i}}^{0}\right)$
and and extends to a bounded operator on $L_{\phi}^{2}\left(\mathbb{H};V_{N_{i}}^{0}\right)$.
\item We have 
\[
\left[\Delta-s(1-s)\right]R_{\mathbb{H},i}^{(T)}\left(s\right)=1+\mathbb{L}_{\mathbb{H},i}^{(T)}\left(s\right)
\]
 as an identity of operators on $L_{\phi_{i}}^{2}\left(\mathbb{H};V_{N_{i}}^{0}\right)$.
\end{enumerate}
\end{lem}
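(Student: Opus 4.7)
The plan is to reduce each assertion to its scalar-valued analogue on $\H$, as already proved in \cite[Lemmas~5.3 and 5.5]{HM21}. The starting observation is that the integral kernels in the statement factor as a radial scalar kernel on $\H$ times $\mathrm{Id}_{V_{N_i}^0}$, and the scalar kernels depend only on the $\PSL_{2}(\R)$-invariant distance $d(x,y)$. A one-line change of variables then shows that both operators preserve the $\Gamma$-equivariance $f(\gamma z)=\rho_{i}(\gamma)f(z)$, so $R_{\H,i}^{(T)}(s)$ and $\mathbb{L}_{\H,i}^{(T)}(s)$ send $C_{c,\phi_{i}}^{\infty}(\H;V_{N_i}^{0})$ into $C_{\phi_{i}}^{\infty}(\H;V_{N_i}^{0})$.

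For the $L^{2}$ boundedness claims I would use the isometric isomorphism $L_{\phi_{i}}^{2}(\H;V_{N_i}^{0})\cong L_{\new}^{2}(X_{i})\subset L^{2}(X_{i})$ to view each operator as an integral operator on $X_{i}$ whose kernel is the $\Gamma$-periodization of the corresponding scalar kernel on $\H$. Because $\mathbb{L}_{\H}^{(T)}(s;r)$ is supported in the compact interval $[T,T+1]$, its periodization is a locally finite sum of smooth bounded terms, and a Schur test yields the $L^{2}$-boundedness claimed in (2). For (1), the same reduction together with the scalar $L^{2}$-boundedness from \cite[Lemma~5.5]{HM21} gives that $R_{\H,i}^{(T)}(s)$ is bounded on $L_{\phi_{i}}^{2}(\H;V_{N_i}^{0})$. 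To promote this to a map into $H_{\phi_{i}}^{2}(\H;V_{N_i}^{0})$ I would invoke the formal consequence of part (3), namely $\Delta R_{\H,i}^{(T)}(s)f=f+\mathbb{L}_{\H,i}^{(T)}(s)f+s(1-s)R_{\H,i}^{(T)}(s)f$, each term of which is controlled in $L^{2}(F)$ by the preceding steps; a standard density argument then places $R_{\H,i}^{(T)}(s)f$ in $H_{\phi_{i}}^{2}(\H;V_{N_i}^{0})$.

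For part (3) itself, because $\Delta-s(1-s)$ acts componentwise on $V_{N_i}^{0}$-valued functions and the kernels factor as scalar times identity, the pointwise scalar identity of \cite[Lemma~5.3]{HM21} transfers verbatim to the $V_{N_i}^{0}$-valued setting on the dense subspace $C_{c,\phi_{i}}^{\infty}(\H;V_{N_i}^{0})$, and it then extends to $L_{\phi_{i}}^{2}$ by continuity using parts (1) and (2). The main technical nuance rather than a genuine obstacle is the absolute convergence of the $\Gamma$-periodization of $R_{\H}^{(T)}(s;\cdot)$; this relies on the kernel decay estimates of \cite[Lemma~5.2]{HM21} together with the unitary bound $\|\rho_{i}(\gamma)\|_{\mathrm{op}}=1$, which ensures that passing from the scalar kernel on $\H$ to the vector-valued kernel on $X_{i}$ does not worsen the operator norms.
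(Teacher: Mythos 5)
Your overall strategy is the right one and matches the paper's intent: the paper's entire proof of this lemma is a one-sentence deferral to the proof of \cite[Lemma 5.5]{HM21}, and your reduction to the scalar case (kernels of the form scalar times $\mathrm{Id}_{V_{N_i}^0}$, preservation of the $\rho_i$-equivariance by the point-pair-invariant kernels, componentwise transfer of the parametrix identity from \cite[Lemma 5.3]{HM21}, and density to pass from $C^{\infty}_{c,\phi_i}$ to $L^{2}_{\phi_i}$) is exactly that proof, simplified by the compactness of $F$.

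One step, however, is justified by an argument that would fail. You claim that absolute convergence of the $\Gamma$-periodization of $R^{(T)}_{\H}(s;\cdot)$ ``relies on the kernel decay estimates'' of \cite[Lemma 5.2]{HM21}. For a cocompact lattice one has $\#\{\gamma : d(x,\gamma y)\le R\}\asymp e^{R}$, while $|R_{\H}(s;r)|\asymp e^{-sr}$ up to polynomial factors, so for every $s\in[\frac{1}{2},1]$ the series $\sum_{\gamma}|R_{\H}(s;x,\gamma y)|$ diverges: decay at infinity cannot give convergence anywhere in the relevant range of $s$ (this is just the failure of the Poincar\'e series at exponents $\le 1$). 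What actually makes the periodization, and the pointwise integral defining $R^{(T)}_{\H,i}(s)f$ for $f$ merely compactly supported mod $\Gamma$, converge is that the cutoff makes the kernel compactly supported: $\chi_{T}(r)R_{\H}(s;r)$ vanishes for $r\ge T+1$, so only finitely many $\gamma$ contribute for $(x,y)\in F\times F$, the periodized kernel is a finite sum which is smooth away from the integrable logarithmic diagonal singularity, and the Schur/Hilbert--Schmidt argument on the compact set $F\times F$ goes through; the same finite propagation is what lets you apply the scalar identity termwise in part (3). (Be aware that the appendix's displayed definition of $\chi_{0}$ has its boundary values swapped relative to \cite{HM21}: one must read $\chi_{T}=1$ near $r=0$ and $\chi_{T}=0$ for $r\ge T+1$, otherwise the ``$1$'' in part (3) disappears and the periodization genuinely diverges.) With the convergence mechanism corrected in this way, the rest of your argument is sound.
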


The proof of Lemma \ref{lem:bounded op} easily follows from the proof
of \cite[Lemma 5.5]{HM21}, simplified in places by the compactness
of the fundamental domain $F$ in the current setting. 

We have an isomorphism of Hilbert spaces 
\[
L_{\phi_{i}}^{2}\left(\mathbb{H};V_{N_{i}}^{0}\right)\cong L^{2}\left(F\right)\otimes V_{N_{i}}^{0},
\]
given by 
\[
f\mapsto\sum_{e_{i}}\langle f\lvert_{F},e_{i}\rangle\otimes e_{i},
\]
where $\{e_{j}\}_{j=1}^{N_{i}-1}$ is some choice of basis for $V_{N_{i}}^{0}$.
After conjugation by this isomorphism, the operator $\mathbb{L}_{\mathbb{H},i}^{(T)}\left(s\right)$
becomes
\begin{equation}
\mathbb{L}_{\mathbb{H},i}^{(T)}\left(s\right)\cong\sum_{\gamma\in S}a_{\gamma}^{(T)}\left(s\right)\otimes\rho_{i}\left(\gamma^{-1}\right),\label{eq:L-N-conjugation}
\end{equation}
where
\begin{align*}
a_{\gamma}^{(T)}\left(s\right) & :L^{2}\left(F\right)\to L^{2}\left(F\right),\\
a_{\gamma}^{(T)}\left(s\right)\left[f\right]\left(x\right) & \eqdf\int_{y\in F}\mathbb{L}_{\mathbb{H}}^{(T)}\left(s;\gamma x,y\right)f\left(y\right)d\mu_{\mathbb{H}}\left(y\right).
\end{align*}
Since $\mathbb{L}_{\mathbb{H}}^{(T)}\left(s,\gamma x,y\right)$ is
only non-zero when $d\left(\gamma x,y\right)\leqslant T+1$, in (\ref{eq:L-N-conjugation})
one can take $S=S\left(T\right)\subset\Gamma$ to be finite. Since
$\mathbb{L}_{\mathbb{H}}^{(T)}\left(s;\gamma x,y\right)$ is smooth
and bounded it follows that the operators $a_{\gamma}^{(T)}\left(s\right)$
are Hilbert-Schmidt and therefore compact. We define 
\[
\mathcal{L}_{s,\infty}^{(T)}\eqdf\sum_{\gamma\in S}a_{\gamma}^{(T)}(s)\otimes\lambda\left(\gamma^{-1}\right).
\]
Under the isomorphism
\begin{align*}
L^{2}\left(F\right)\otimes\ell^{2}\left(\Gamma\right) & \cong L^{2}\left(\mathbb{H}\right),\\
f\otimes\delta_{\gamma} & \mapsto f\circ\gamma^{-1},
\end{align*}
(with $f\circ\gamma^{-1}$ extended by zero from a function on $\gamma F$)
the operator $\mathcal{L}_{s,\infty}^{(T)}$ is conjugated to 
\[
\mathbb{L}_{\mathbb{H}}^{(T)}(s):L^{2}\left(\mathbb{H}\right)\to L^{2}\left(\mathbb{H}\right).
\]

To prove Theorem \ref{thm:Main Theorem}, we need to replace the probabilistic
bound \cite[Lemma 6.3]{HM21} by a deterministic one. 
\begin{prop}
\label{prop:Operator norm bound}For any $s_{0}>\frac{1}{2}$ there
is a $T=T(s_{0})>0$ such that for any fixed $s\in[s_{0},1]$ there
is an $I\left(s_{0},s\right)$ with
\[
\|\mathcal{L}_{s,\phi_{i}}^{(T)}\|_{L^{2}(F)\otimes V_{N_{i}}^{0}}\leq\frac{1}{4},
\]
for all $i\geqslant I$.
\end{prop}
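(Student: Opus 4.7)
The plan is to read this proposition off directly from (the compact-operator extension of) Corollary \ref{cor:matrix-coefficients}, applied to the finite decomposition $\mathbb{L}_{\mathbb{H},i}^{(T)}(s)\cong\sum_{\gamma\in S(T)}a_{\gamma}^{(T)}(s)\otimes\rho_{i}(\gamma^{-1})$ provided by (\ref{eq:L-N-conjugation}). Since $\mathcal{L}_{s,\phi_i}^{(T)}$ is just $\mathbb{L}_{\mathbb{H},i}^{(T)}(s)$ after a unitary conjugation, bounding the right-hand side of that decomposition suffices.

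The first step I would take is to fix $s_{0}>\tfrac{1}{2}$ and, using (\ref{eq:infinity operator bound}) with $T$ taken somewhat larger than the value strictly required there, choose $T=T(s_{0})$ such that
\[
\|\mathbb{L}_{\mathbb{H}}^{(T)}(s)\|_{L^{2}(\mathbb{H})}\leq\kappa\quad\text{for all }s\in[s_{0},1],
\]
where $\kappa<\tfrac{1}{4}$ is any fixed constant (for concreteness $\kappa=\tfrac{1}{8}$). This strict margin below $\tfrac{1}{4}$ is necessary because the strong convergence furnishes only a $\limsup$ bound. With $T$ now fixed, the integral operators $a_{\gamma}^{(T)}(s):L^{2}(F)\to L^{2}(F)$ are Hilbert-Schmidt (their kernels are smooth and uniformly bounded on the compact set $F\times F$), and $S(T)\subset\Gamma$ is finite. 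Defining the finitely supported coefficient map $a:\Gamma\to\mathcal{K}(L^{2}(F))$ by $a(\gamma)\eqdf a_{\gamma^{-1}}^{(T)}(s)$ when $\gamma^{-1}\in S(T)$ and $a(\gamma)\eqdf 0$ otherwise, Corollary \ref{cor:matrix-coefficients}, together with its extension to compact-operator coefficients noted after (\ref{eq:Appendix-limsup-bound}), yields
\[
\limsup_{i\to\infty}\bigl\|\sum_{\gamma\in\Gamma}a(\gamma)\otimes\rho_{i}(\gamma)\bigr\|\leq\bigl\|\sum_{\gamma\in\Gamma}a(\gamma)\otimes\lambda(\gamma)\bigr\|=\|\mathcal{L}_{s,\infty}^{(T)}\|.
\]
Under the isomorphism $L^{2}(F)\otimes\ell^{2}(\Gamma)\cong L^{2}(\mathbb{H})$ recalled in the setup, the right-hand operator is conjugate to $\mathbb{L}_{\mathbb{H}}^{(T)}(s)$, whose norm is at most $\kappa$. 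Thus $\limsup_{i}\|\mathcal{L}_{s,\phi_{i}}^{(T)}\|\leq\kappa<\tfrac{1}{4}$, and choosing $I=I(s_{0},s)$ so that the limsup is approached within $\tfrac{1}{4}-\kappa$ gives $\|\mathcal{L}_{s,\phi_{i}}^{(T)}\|\leq\tfrac{1}{4}$ for all $i\geq I$.

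The main obstacle in executing this plan is the strict-inequality requirement: we need (\ref{eq:infinity operator bound}) to hold with $\tfrac{1}{4}$ replaced by some fixed $\kappa<\tfrac{1}{4}$. This should follow from inspection of \cite[Lemma 5.2]{HM21}: as $T$ grows, the cutoff $\chi_{T}$ pushes the support of the kernel of $\mathbb{L}_{\mathbb{H}}^{(T)}(s)$ out to large radii $r\geq T$, where the resolvent $R_{\mathbb{H}}(s;r)$ and its first two radial derivatives decay exponentially in $r$ uniformly for $s\in[s_{0},1]$; hence the relevant operator norm tends to $0$ as $T\to\infty$, and any fixed $\kappa<\tfrac{1}{4}$ is attained for $T$ sufficiently large. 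Apart from this verification, the plan is purely a translation of Corollary \ref{cor:matrix-coefficients} from the ambient group algebra to the matrix-coefficient setting needed here, with no probabilistic input.
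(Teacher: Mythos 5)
Your proposal is correct and follows essentially the same route as the paper's proof: fix $T(s_{0})$ so that $\|\mathcal{L}_{s,\infty}^{(T)}\|\leq\frac{1}{8}$, then apply the compact-coefficient extension of Corollary \ref{cor:matrix-coefficients} to the finite decomposition (\ref{eq:L-N-conjugation}) and absorb the $\limsup$ slack into the remaining $\frac{1}{8}$. Your explicit remark that one needs a margin strictly below $\frac{1}{4}$ in (\ref{eq:infinity operator bound}) --- obtained by taking $T$ larger, since the norm of $\mathbb{L}_{\mathbb{H}}^{(T)}(s)$ tends to $0$ as $T\to\infty$ --- is a point the paper uses implicitly without comment, so your write-up is if anything slightly more careful.
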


\begin{proof}
Let $s_{0}>\frac{1}{2}$ and a fixed $s\in[s_{0},1]$ be given. By
(\ref{eq:infinity operator bound}) we can find a $T\left(s_{0}\right)$
such that
\begin{equation}
\|\mathcal{L}_{s,\infty}^{(T)}\|_{L^{2}(F)\otimes\ell^{2}(\Gamma)}\leq\frac{1}{8}.\label{eq:temp-l;infty-bound}
\end{equation}
Recall that the coefficients $a_{\gamma}(s)$ are supported on a finite
set $S=S(T)\subset\Gamma$. Because the $a_{\gamma}(s)$ are compact,
we apply (\ref{eq:Appendix-limsup-bound}) (and the following remark)
to the operators $\mathbb{L}_{\mathbb{H},i}^{(T)}\left(s\right)$
to find that there is $I\in\N$ such that for all $i\geq I$
\[
\|\mathcal{L}_{s,\phi_{i}}^{(T)}\|_{L^{2}(F)\otimes V_{N_{i}}^{0}}\leq\|\mathcal{L}_{s,\infty}^{(T)}\|_{L^{2}(F)\otimes\ell^{2}(\Gamma)}+\frac{1}{8}\leq\frac{1}{4}.
\]
\end{proof}
We can now prove Theorem \ref{thm:Main Theorem}.
\begin{proof}[Proof of Theorem \ref{thm:Main Theorem}]
Given $\epsilon>0$ let $s_{0}=\frac{1}{2}+\sqrt{\epsilon}$ so that
$s_{0}\left(1-s_{0}\right)=\frac{1}{4}-\epsilon$. Let $T=T\left(s_{0}\right)$
be the value provided by Proposition \ref{prop:Operator norm bound}
for this $s_{0}$. 

We use a finite net to control all values of $s\in\left[s_{0},1\right]$.
Using \cite[Lemma 6.1]{HM21} as in \cite[Proof of Thm. 1.1]{HM21}
tells us that there is a finite set $Y=Y(s_{0})$ of points in $[s_{0},1]$
such that for any $s\in[s_{0},1]$ there is $s'\in Y$ with
\begin{equation}
\|\mathcal{L}_{s,\phi_{i}}^{(T)}-\mathcal{L}_{s',\phi_{i}}^{(T)}\|\leq\frac{1}{4}\label{eq:L-deviations}
\end{equation}
 for all $i$. 

Combining (\ref{eq:L-deviations}) with Proposition \ref{prop:Operator norm bound}
applied to $\mathcal{L}_{s,\phi_{i}}^{(T)}$ for every $s\in Y$ we
find that there is an $I\left(s_{0}\right)$ such that for all $s\in[s_{0},1]$
and $i\geq I(s_{0})$
\begin{equation}
\|\mathbb{L}_{\mathbb{H},i}^{(T)}\left(s\right)\|_{L_{\new}^{2}(X_{i})}=\|\mathcal{L}_{s,\phi_{i}}^{(T)}\|_{L^{2}(F)\otimes V_{N_{i}}^{0}}\leq\frac{1}{2}.\label{eq:Surface-op-bound}
\end{equation}

By Lemma \ref{lem:bounded op}, for $s>\frac{1}{2}$ $R_{\mathbb{H},i}^{(T)}\left(s\right)$
is a bounded operator from $L_{\text{\ensuremath{\new}}}^{2}\left(X_{i}\right)$
to $H_{\new}^{2}\left(X_{i}\right)$. By Lemma \ref{lem:bounded op}
we have that
\[
\left(\Delta_{X_{\phi}}-s(1-s)\right)R_{\mathbb{H},i}^{(T)}\left(s\right)=1+\mathbb{L}_{\mathbb{H},i}^{(T)}\left(s\right),
\]
on $L_{\text{new}}^{2}\left(X_{i}\right)$. From (\ref{eq:Surface-op-bound})
for all $i\geqslant I\left(s_{0}\right)$ and $s\in\left[s_{0},1\right]$
$\left(1+\mathbb{L}_{\mathbb{H},i}^{(T)}\left(s\right)\right)^{-1}$
exists as a bounded operator on $L_{\new}^{2}\left(X_{i}\right)$.
We now get that for all $i\geqslant I\left(s_{0}\right)$ and all
$s\in\left[s_{0},1\right],$
\[
\left(\Delta_{X_{i}}-s\left(1-s\right)\right)R_{\mathbb{H},i}^{(T)}\left(s\right)\left(1+\mathbb{L}_{\mathbb{H},i}^{(T)}\left(s\right)\right)^{-1}=1,
\]
and we conclude that $\left(\Delta_{X_{i}}-s\left(1-s\right)\right)$
has a bounded right inverse from $L_{\text{\ensuremath{\new}}}^{2}\left(X_{i}\right)$
to $H_{\new}^{2}\left(X_{i}\right)$, implying that for $i\geqslant I\left(s_{0}\right)$,
$\Delta_{X_{i}}$ has no new eigenvalues $\lambda$ with $\lambda\leq s_{0}\left(1-s_{0}\right)=\frac{1}{4}-\epsilon$. 
\end{proof}
\bibliographystyle{amsalpha}
\bibliography{strong_convergence}

\noindent Will Hide, \\
Department of Mathematical Sciences,\\
Durham University, \\
Lower Mountjoy, DH1 3LE Durham,\\
United Kingdom

\noindent \texttt{william.hide@durham.ac.uk}~\\
\texttt{}~\\
Larsen Louder, \\
Department of Mathematics,\\
University College London,\\
Gower Street, London WC1E 6BT,\\
United Kingdom\\
\texttt{l.louder@ucl.ac.uk}~\\
\texttt{}~\\
Michael Magee, \\
Department of Mathematical Sciences,\\
Durham University, \\
Lower Mountjoy, DH1 3LE Durham,\\
United Kingdom

\noindent \texttt{michael.r.magee@durham.ac.uk}\\

\end{document}